\pgfplotsset{%
  layers/standard/.define layer set={%
    background,axis background,axis grid,axis ticks,axis lines,axis tick labels,pre main,main,axis descriptions,axis foreground%
  }{%
    grid style={/pgfplots/on layer=axis grid},%
    tick style={/pgfplots/on layer=axis ticks},%
    axis line style={/pgfplots/on layer=axis lines},%
    label style={/pgfplots/on layer=axis descriptions},%
    legend style={/pgfplots/on layer=axis descriptions},%
    title style={/pgfplots/on layer=axis descriptions},%
    colorbar style={/pgfplots/on layer=axis descriptions},%
    ticklabel style={/pgfplots/on layer=axis tick labels},%
    axis background@ style={/pgfplots/on layer=axis background},%
    3d box foreground style={/pgfplots/on layer=axis foreground},%
  },
}
\pgfplotsset{compat=1.18}  
 \setlist[enumerate]{itemsep=0pt,parsep=0pt,topsep=0pt,partopsep=0pt}
\definecolor{labelkey}{rgb}{0,0.08,0.45}
\definecolor{refkey}{rgb}{0,0.6,0.0}
\definecolor{Brown}{rgb}{0.45,0.0,0.05}
\definecolor{lime}{rgb}{0.00,0.8,0.0}
\definecolor{lblue}{rgb}{0.5,0.5,0.99}
\definecolor{OliveGreen}{rgb}{0,0.6,0}
\definecolor{tyrianpurple}{rgb}{0.4, 0.01, 0.24}
\colorlet{hlcyan}{cyan!30}
\def\namedlabel#1#2{\begingroup
	\def\@currentlabel{#2}%
	\label{#1}\endgroup
}
\newcommand{\seppthree}{\setlength{\itemsep}{-3pt}}
\newcommand{\weakly}{\ensuremath{\:{\rightharpoonup}\:}}
\newcommand{\nnn}{\ensuremath{{n\in{\mathbb N}}}}
\newcommand{\knn}{\ensuremath{{k\in{\mathbb N}}}}
\newcommand{\thalb}{\ensuremath{\tfrac{1}{2}}}
\newcommand{\menge}[2]{\big\{{#1}~\big |~{#2}\big\}}
\newcommand{\fenv}[1]%
{\ensuremath{\,\overrightarrow{\operatorname{env}}_{#1}}}
\newcommand{\benv}[1]%
{\ensuremath{\,\overleftarrow{\operatorname{env}}_{#1}}}
\newcommand{\scal}[2]{\left\langle{#1},{#2}  \right\rangle}
\newcommand{\zeroun}{\ensuremath{\left]0,1\right[}}
\newcommand{\RR}{\ensuremath{\mathbb R}}
\newcommand{\RP}{\ensuremath{\mathbb{R}_+}}
\newcommand{\RPP}{\ensuremath{\mathbb{R}_{++}}}
\newcommand{\NN}{\ensuremath{\mathbb N}}
\DeclareMathOperator*{\argmin}{argmin}
\newcommand{\prox}{\ensuremath{\operatorname{Prox}}}
\newcommand{\cspan}{\ensuremath{\overline{\operatorname{span}}\,}}
\newcommand{\Fix}{\ensuremath{\operatorname{Fix}}}
\newcommand{\Id}{\ensuremath{\operatorname{Id}}}
\newcommand{\distsq}[2]{\operatorname{dist}^2\!\left(#1,#2\right)}
\newcommand{\minimize}[2]{\ensuremath{\underset{\substack{{#1}}}{\mathrm{minimize}}\;\;#2 }}
\crefname{equation}{}{equations}
\crefname{chapter}{Appendix}{chapters}
\crefname{item}{}{items}
\crefname{enumi}{}{}
\crefname{appsec}{Appendix}{Appendices}
\newtheorem{theorem}{Theorem}[section]
\newaliascnt{lemma}{theorem}
\newtheorem{lemma}[lemma]{Lemma}
\crefname{lemma}{Lemma}{Lemmas}
\Crefname{lemma}{Lemma}{Lemmas}
\newaliascnt{lem}{theorem}
\crefname{lem}{Lemma}{Lemmas}
\Crefname{lem}{Lemma}{Lemmas}
\newaliascnt{corollary}{theorem}
\crefname{corollary}{Corollary}{Corollaries}
\Crefname{corollary}{Corollary}{Corollaries}
\newaliascnt{cor}{theorem}
\crefname{cor}{Corollary}{Corollaries}
\Crefname{cor}{Corollary}{Corollaries}
\newaliascnt{proposition}{theorem}
\newtheorem{proposition}[proposition]{Proposition}
\crefname{proposition}{Proposition}{Propositions}
\Crefname{proposition}{Proposition}{Propositions}
\newaliascnt{prop}{theorem}
\crefname{prop}{Proposition}{Propositions}
\Crefname{prop}{Proposition}{Propositions}
\newaliascnt{definition}{theorem}
\newtheorem{definition}[definition]{Definition}
\crefname{definition}{Definition}{Definitions}
\Crefname{definition}{Definition}{Definitions}
\newaliascnt{defn}{theorem}
\crefname{defn}{Definition}{Definitions}
\Crefname{defn}{Definition}{Definitions}
\newaliascnt{thm}{theorem}
\crefname{thm}{Theorem}{Theorems}
\Crefname{thm}{Theorem}{Theorems}
\newaliascnt{example}{theorem}
\newtheorem{example}[example]{Example}
\crefname{example}{Example}{Examples}
\Crefname{example}{Example}{Examples}
\newaliascnt{ex}{theorem}
\crefname{ex}{Example}{Examples}
\Crefname{ex}{Example}{Examples}
\newaliascnt{fact}{theorem}
\newtheorem{fact}[fact]{Fact}
\crefname{fact}{Fact}{Facts}
\Crefname{fact}{Fact}{Facts}
\newaliascnt{remark}{theorem}
\crefname{remark}{Remark}{Remarks}
\Crefname{remark}{Remark}{Remarks}
\newaliascnt{rem}{theorem}
\crefname{rem}{Remark}{Remarks}
\Crefname{rem}{Remark}{Remarks}
\newcommand{\boxedeqn}[1]{%
	\begin{equation}\fbox{%
		\addtolength{\linewidth}{-2\fboxsep}%
		\addtolength{\linewidth}{-2\fboxrule}%
		\begin{minipage}{\linewidth}%
			\begin{equation}#1\\begin{equation}+5mm]\end{equation}%
		\end{minipage}%
	}\end{equation}%
}
\providecommand{\norm}[1]{\lVert#1\rVert}
\providecommand{\lip}{\beta}
\providecommand{\RR}{\mathbb{R}}
\newcommand{\fix}{\ensuremath{\operatorname{Fix}}}
\providecommand{\Id}{\operatorname{{ Id}}}
\providecommand{\NN}{\mathbb{N}}
\providecommand{\fix}{\operatorname{Fix}}
\providecommand{\Id}{\operatorname{Id}}
\providecommand{\spn}{\operatorname{span}}
\providecommand{\RR}{\mathbb{R}}
\providecommand{\NN}{\mathbb{N}}
\definecolor{myblue}{RGB}{12,35,68}
\definecolor{mybrown}{RGB}{120,75,50} 
\newcommand{\mybluebox}[1]{%
  \begingroup
  \setlength{\fboxsep}{6pt}
  \fcolorbox{myblue}{myblue!8}{$\displaystyle #1$}%
  \endgroup
}
\begin{document}
										
										
										
										%

\author{ 
   Heinz H.\ Bauschke\thanks{Department of Mathematics, University of British Columbia, Kelowna, B.C.\ V1V~1V7, Canada. E-mail: \texttt{heinz.bauschke@ubc.ca}}
   \;\;and\;\;
   Walaa M.\ Moursi\thanks{Department of Combinatorics and Optimization, University of Waterloo, Waterloo, Ontario N2L~3G1, Canada. E-mail: \texttt{walaa.moursi@uwaterloo.ca}}
}

\title{\textsf{\textbf{Understanding FISTA's weak convergence:\\
A step-by-step introduction to the 
2025~milestone
}
}
}
\date{{\color{black}February 28, 2026}}
\maketitle
\begin{abstract}

Beck and Teboulle's FISTA for finding a minimizer of the sum of two convex functions is one of the most important 
algorithms of the past decades. While function 
value convergence of the iterates was known, the 
actual convergence of the iterates remained elusive until 
October 2025 when 
Jang and Ryu, as well as 
Bo\c{t}, Fadili, and Nguyen proved weak convergence. 

In this paper, we provide a gentle self-contained introduction to the proof of their remarkable result.
\end{abstract}
										{ 
											\small
											\noindent
											{\bfseries 2010 Mathematics Subject Classification:}
											{Primary 
                                    65K05,
                                    65K10,
                                    90C25; 
												Secondary 
												47H09.
											}
                                            
											\noindent {\bfseries Keywords:}
                                            accelerated gradient descent,
											convex function,
                                            FISTA, 
                                            Nesterov acceleration, 
                                            proximal gradient method,
                                            proximal mapping.
											
										}

\section{Introduction}

Throughout, we assume that 
\begin{empheq}[box=\mybluebox]{equation}
\text{$X$ is a real Hilbert space,} 
\end{empheq}
with inner product $\scal{\cdot}{\cdot}\colon X\times X\to\RR$, and induced norm $\|\cdot\|$.
We also assume:
\begin{subequations}\label{intro}
\begin{empheq}[box=\mybluebox]{align}
& f : X \to \mathbb{R} \ \text{is convex and $\lip$-smooth, where } \lip \in \mathbb{R}_{++}, \label{30.1a} \\
& g : X \to \left]-\infty, +\infty\right] \ \text{is convex lower semicontinuous and proper}, \label{30.1b} \\
& F := f + g. \label{30.1c}
\end{empheq}
\end{subequations}
We study the problem
\begin{equation}
\label{e:theprob}
\minimize{x\in X}\ F(x)=f(x)+g(x).
\end{equation}
We set and assume that 
\begin{empheq}[box=\mybluebox]{equation}
\label{30.1d}
S := \argmin F\neq\varnothing\quad\text{and}\qquad \mu := \min F(X).
\end{empheq}
The \emph{Proximal Gradient Method (PGM)} to solve \cref{intro} iterates the operator $T$ 
\begin{empheq}[box=\mybluebox]{equation}
\label{30.1e}
T := \prox_{\frac{1}{\lip} g}\circ \big(\Id - \tfrac{1}{\lip}\nabla f\big)
\end{empheq}
for which $\Fix T = S$. 
When $g\equiv 0$, PGM reduces to the gradient descent method, 
while $f\equiv 0$ makes PGM coincide with the proximal point algorithm. 
The sequence $(T^kx_0)_\knn$ generated by the proximal
gradient operator is known to converge weakly\footnote{Strong convergence fails in general; see, e.g., Hundal's example \cite{Hundal}.} to some 
point in $S$, with rate $0\leq F(x_k)-\mu = \mathcal{O}\big(\tfrac{1}{k}\big)$. 

Arguably one of the most important algorithms 
(if not the most important) in convex optimization 
to solve  \cref{e:theprob} 
is Beck and Teboulle's FISTA algorithm, which we now describe:
Throughout the paper, we assume that 
$(t_k)_{k \in \mathbb{N}}$ is a parameter sequence of positive real numbers satisfying the following 
for every $k \in \mathbb{N}$:
\begin{subequations}
\label{30.2}
\begin{empheq}[box=\mybluebox]{align}
t_k &\ge \frac{k+2}{2} \ge 1 =: t_0,  
\label{eq:FS:i}
\\
t_k^{2} &\ge t_{k+1}^{2} - t_{k+1}. 
\label{eq:FS:ii}
\end{empheq}
\end{subequations}

Given any $x_0\in X$, FISTA generates two sequences 
$(x_k)_\knn,(y_k)_\knn$ in $X$ via $y_0 := x_0$ and 
the update formulas 
\begin{subequations}\label{30.3}
\begin{empheq}[box=\mybluebox]{align}
x_{k+1} &:= T y_k = \prox_{\frac{1}{\lip} g} \big( y_k - \tfrac{1}{\lip} \nabla f(y_k) \big), \label{30.3a} \\
y_{k+1} &:= x_{k+1} + \frac{t_k - 1}{t_{k+1}} \big( x_{k+1} - x_k \big). \label{30.3b}
\end{empheq}
\end{subequations}

In the smooth case, $g\equiv 0$, FISTA 
specializes to Nesterov's accelerated gradient descent \cite{Nest83}.

\begin{fact}
\label{thm:30.4}
\label{f:BT}
Recall our assumptions~\cref{intro}, \cref{30.1d} and \cref{30.1e}.
Then the FISTA sequence $(x_k)_\knn$ satisfies
$0 \leq F(x_k) - \mu \leq \frac{2\lip \distsq{x_0}{S}}{(k+1)^{2}}
= \mathcal{O}\big(\tfrac{1}{k^{2}}\big).$
In particular,
\begin{equation}
\label{eq:300}
F(x_k) \to \mu.
\end{equation}
\end{fact}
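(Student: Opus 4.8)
The plan is to run the classical Beck--Teboulle potential-function argument, here adapted to the abstract parameter conditions \cref{30.2}. Everything rests on one \emph{fundamental one-step inequality}: if $x^+ := T y$ for some $y \in X$, then for every $z \in X$,
\[
\tfrac{2}{\lip}\bigl(F(x^+) - F(z)\bigr) \;\le\; \|y - z\|^2 - \|x^+ - z\|^2 .
\]
I would obtain this by combining three standard ingredients along the step $y \mapsto x^+$: the descent lemma $f(x^+) \le f(y) + \scal{\nabla f(y)}{x^+ - y} + \tfrac{\lip}{2}\|x^+ - y\|^2$ (from $\lip$-smoothness); the inequality $f(y) - f(z) \le \scal{\nabla f(y)}{y - z}$ (from convexity of $f$); and the subgradient inequality for $g$ at $x^+$, available since $x^+ = \prox_{\frac1\lip g}\bigl(y - \tfrac1\lip\nabla f(y)\bigr)$ forces $\lip(y - x^+) - \nabla f(y) \in \partial g(x^+)$. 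Summing the three and rewriting the resulting cross term via $2\scal{y - x^+}{x^+ - z} = \|y - z\|^2 - \|x^+ - y\|^2 - \|x^+ - z\|^2$, the $\tfrac{\lip}{2}\|x^+ - y\|^2$ contributions cancel and the display follows.

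Next, write $v_k := F(x_k) - \mu \ge 0$; fix $x^* \in S$ and set $a_k := t_k x_{k+1} - (t_k - 1)x_k - x^*$. I would feed the fundamental inequality with $y = y_k$, $x^+ = x_{k+1}$ twice --- once with $z = x_k$, once with $z = x^*$ --- and then add $(t_k - 1)$ times the first to the second and multiply through by $t_k$, reaching
\[
\tfrac{2}{\lip}\bigl(t_k^2 v_{k+1} - t_k(t_k-1)v_k\bigr) \le \bigl(t_k(t_k-1)\|y_k - x_k\|^2 + t_k\|y_k - x^*\|^2\bigr) - \bigl(t_k(t_k-1)\|x_{k+1} - x_k\|^2 + t_k\|x_{k+1} - x^*\|^2\bigr).
\]
The decisive step is the algebraic identity $t(t-1)\|p - q\|^2 + t\|p - r\|^2 = \|tp - (t-1)q - r\|^2 + (t-1)\|q - r\|^2$, applied to each bracket with $q = x_k$, $r = x^*$ and $p \in \{y_k, x_{k+1}\}$: the two remainder terms $(t_k - 1)\|x_k - x^*\|^2$ cancel, while rearranging the momentum update \cref{30.3b} to $t_k y_k = (t_k + t_{k-1} - 1)x_k - (t_{k-1} - 1)x_{k-1}$ identifies $t_k y_k - (t_k-1)x_k - x^* = a_{k-1}$. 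The right-hand side therefore collapses to $\|a_{k-1}\|^2 - \|a_k\|^2$, and since $v_k \ge 0$ and \cref{eq:FS:ii} yields $t_k(t_k-1) = t_k^2 - t_k \le t_{k-1}^2$, we obtain, for $k \ge 1$,
\[
\tfrac{2}{\lip}t_k^2 v_{k+1} + \|a_k\|^2 \;\le\; \tfrac{2}{\lip}t_{k-1}^2 v_k + \|a_{k-1}\|^2 .
\]
A separate, easy computation for the first step ($y_0 = x_0$, $t_0 = 1$, $a_0 = x_1 - x^*$), using only the fundamental inequality with $z = x^*$, gives $\tfrac{2}{\lip}t_0^2 v_1 + \|a_0\|^2 \le \|x_0 - x^*\|^2$.

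Setting $E_k := \tfrac{2}{\lip}t_{k-1}^2 v_k + \|a_{k-1}\|^2$, the last two displays say $E_{k+1} \le E_k$ for $k \ge 1$ and $E_1 \le \|x_0 - x^*\|^2$; hence $\tfrac{2}{\lip}t_{k-1}^2 v_k \le E_k \le \|x_0 - x^*\|^2$ for all $k \ge 1$, and dividing through, then using \cref{eq:FS:i} in the form $t_{k-1} \ge \tfrac{k+1}{2}$, yields $v_k \le \tfrac{\lip\|x_0 - x^*\|^2}{2 t_{k-1}^2} \le \tfrac{2\lip\|x_0 - x^*\|^2}{(k+1)^2}$. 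Taking the infimum over $x^* \in S$ gives $0 \le F(x_k) - \mu \le \tfrac{2\lip\distsq{x_0}{S}}{(k+1)^2}$ for $k \ge 1$ (the bound at $k = 0$ being the trivial $F(x_0) - \mu \le 2\lip\distsq{x_0}{S}$ whenever $x_0 \in \dom g$), where $F(x_k) \ge \mu$ is immediate from $\mu = \min F(X)$; and \cref{eq:300} then follows by letting $k \to \infty$. The one genuinely delicate part is the middle paragraph: the telescoping is exact only because the remainder terms $(t_k-1)\|x_k - x^*\|^2$ annihilate each other and because the momentum coefficient in \cref{30.3b} is \emph{precisely} $\tfrac{t_k-1}{t_{k+1}}$ --- any other choice destroys the identity $t_k y_k - (t_k-1)x_k - x^* = a_{k-1}$ --- with \cref{eq:FS:ii} needed to absorb $t_k(t_k-1)v_k$ into $t_{k-1}^2 v_k$; carrying the indices and signs correctly through that computation is where all the care goes.
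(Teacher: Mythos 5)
Your proof is correct and is essentially the argument the paper relies on: the fact is cited to Beck--Teboulle, but the paper reproduces exactly this Lyapunov computation in \cref{app:zkbd} when proving \cref{lem:key}, where your quantity $E_{k+1}=\tfrac{2}{\lip}t_k^2v_{k+1}+\|a_k\|^2$ coincides with $\tfrac{2}{\lip}\xi_{k+1}(x^*)$ (since $a_k=z_{k+1}-x^*$), and the rate then follows from $t_{k-1}^2\delta_k\le\xi_k\le\xi_1\le\tfrac{\lip}{2}\|x_0-x^*\|^2$ together with $t_{k-1}\ge\tfrac{k+1}{2}$, exactly as you conclude. The only (harmless) caveat, which you already flag, is that the bound at $k=0$ is not covered by the telescoping and indeed need not hold for general $g$; the estimate is properly a statement for $k\ge 1$.
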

\begin{proof}
This is \cite[Theorem~4.4]{BT2009} 
(While this result is stated in a finite-dimensional space, 
the proof carries over without difficulty.)
See also \cite{Nest83} for the case 
when $g\equiv 0$. 
\end{proof}

\cref{f:BT} is extremely important because not only is 
the function value convergence rate improved from PGM's 
$0\leq F(x_k)-\mu 
= \mathcal{O}\big(\tfrac{1}{k}\big)$ to 
FISTA's $0\leq F(x_k)-\mu = \mathcal{O}\big(\tfrac{1}{k^2}\big)$ but it is also known that FISTA's rate is optimal (see \cite[Section~2.1.2]{Nesterov2nded}). 

Beck and Teboulle's original assignment of parameters 
occurs when the largest solution of the quadratic (in 
$t_{k+1}$) inequality \cref{eq:FS:ii} is chosen, i.e., when 
\begin{equation}
\label{BTpar}
t_{k+1} = \frac{1+\sqrt{4t_k^2+1}}{2}. 
\end{equation}
However, for their choice, the convergence of the sequence 
$(x_k)_\knn$ was unknown\footnote{For parameter sequences with guaranteed convergence, see, e.g., \cite{ChD15} and \cite{AttouchCabot2018}.} until October 2025 
when 
Jang and Ryu announced an AI-assisted proof 
(see \cite[Section~1.2]{JangRyu2025} and the X posts \cite{RyuX2025}). 
Independently and in parallel, Bo{\c t}, 
Fadili, and Nguyen proved weak convergence \cite{BFN2025}. Both teams considered different algorithms as well.
Their analyses and proof technique were inspired by 
continuous-time models; see, e.g., 
\cite{SuBoydCandes2016,AttouchCabot2018}.
For other recent related work, 
see \cite{MNPV} and \cite{Salzo}. 

\emph{
The goal of this paper is to provide an elementary self-contained exposition 
of the proof of weak convergence of the FISTA sequence $(x_k)_\knn$.} 
We do not claim priority to this remarkable result; instead,
our aim is 
to make this material easily accessible for teaching: 
indeed, 
undergraduate students who took introductory courses in 
convex optimization and in functional analysis\footnote{A course in functional analysis is optional if one is solely interested in the convergence in Euclidean space.} should be 
able to enjoy this proof.

The remainder of this paper is organized as follows. 
In \cref{s:sota}, {\color{black} we review two facts on FISTA
that were known in September 2025. 
}
First steps towards the convergence proof are taken 
in \cref{s:approch}. 
In \cref{s:BCCH}, we discuss a very useful tool for 
establishing convergence of real sequences. 
Salzo sequences, which capture the essential idea of 
Opial's Lemma, are considered in \cref{s:Salzo}. 
The final \cref{s:theproof} contains a structured proof
of weak convergence for FISTA, as well as an
example featuring a convex feasibility problem
with a solution set that is not a singleton. 

Our notation is standard and follows largely \cite{BC2017}.

\section{{\color{black} Two useful facts that were known in September 2025}} 

\label{s:sota}

The following results were essentially already contained 
in the original analysis \cite{BT2009} by Beck and Teboulle. 

\begin{fact}
\label{lem:29.2}
Recall \cref{intro} and \cref{30.1e}.
Let $(x,y)\in X\times X$, and set $y_{+} := T y$. Then
\begin{equation}\label{29.4}
F(x) - F(y_{+}) \ge \frac{\lip}{2} \|x - y_{+}\|^{2} - \frac{\lip}{2} \|x - y\|^{2}.
\end{equation}
\end{fact}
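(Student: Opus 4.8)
The plan is to derive \cref{29.4} by assembling three elementary estimates: the \emph{descent lemma} for the $\lip$-smooth function $f$, the gradient inequality coming from convexity of $f$, and the subdifferential inequality for $g$ evaluated at the proximal point $y_{+}$. All of these hold globally here because $f$ is real-valued and $\lip$-smooth (hence differentiable on all of $X$) and $g$ is convex, lower semicontinuous, and proper.

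First I would unpack the definition $T=\prox_{\frac{1}{\lip}g}\circ(\Id-\tfrac{1}{\lip}\nabla f)$. The first-order characterization of the proximal mapping says that $y_{+}=Ty$ is equivalent to $y-\tfrac{1}{\lip}\nabla f(y)-y_{+}\in\tfrac{1}{\lip}\partial g(y_{+})$, i.e.\ $\lip(y-y_{+})-\nabla f(y)\in\partial g(y_{+})$. Inserting this subgradient into the subgradient inequality for $g$ at the point $x$ gives $g(x)-g(y_{+})\ge\scal{\lip(y-y_{+})-\nabla f(y)}{x-y_{+}}$. Next I would use the two standard facts about $f$: $\lip$-smoothness yields $f(y_{+})\le f(y)+\scal{\nabla f(y)}{y_{+}-y}+\tfrac{\lip}{2}\normsq{y_{+}-y}$, and convexity yields $f(x)\ge f(y)+\scal{\nabla f(y)}{x-y}$. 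Adding these two (the first with a sign flip) and using $(x-y)-(y_{+}-y)=x-y_{+}$ produces $f(x)-f(y_{+})\ge\scal{\nabla f(y)}{x-y_{+}}-\tfrac{\lip}{2}\normsq{y_{+}-y}$.

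Finally I would add the $f$-estimate to the $g$-estimate to bound $F(x)-F(y_{+})=\big(f(x)-f(y_{+})\big)+\big(g(x)-g(y_{+})\big)$ from below. The two occurrences of $\scal{\nabla f(y)}{x-y_{+}}$ appear with opposite signs and cancel, leaving
\[
F(x)-F(y_{+})\;\ge\;\lip\scal{y-y_{+}}{x-y_{+}}-\tfrac{\lip}{2}\normsq{y-y_{+}}.
\]
Applying the polarization identity $\scal{y-y_{+}}{x-y_{+}}=\tfrac{1}{2}\big(\normsq{y-y_{+}}+\normsq{x-y_{+}}-\normsq{x-y}\big)$, multiplying by $\lip$, and cancelling the $\tfrac{\lip}{2}\normsq{y-y_{+}}$ terms then yields exactly \cref{29.4}. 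There is no substantive obstacle in this argument; the only points requiring care are invoking the prox optimality condition in the correct form $\lip(y-y_{+})-\nabla f(y)\in\partial g(y_{+})$, and the sign bookkeeping that makes the $\nabla f(y)$-terms cancel pairwise.
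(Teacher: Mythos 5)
Your proof is correct, and it coincides with the standard argument behind the result the paper merely cites (the paper's ``proof'' is only a reference to \cite[Theorem~10.16]{Beck}): your combination of the prox optimality condition $\lip(y-y_{+})-\nabla f(y)\in\partial g(y_{+})$, the descent lemma, the gradient inequality for convex $f$, and the polarization identity is exactly how that cited theorem is established. All signs and cancellations check out, so your write-up in fact supplies the details the paper outsources to the reference.
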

\begin{proof} See \cite[Theorem~10.16]{Beck}.
\end{proof}

The \emph{auxiliary sequence}
$(z_k)_\nnn$, defined by\footnote{We borrow this name from  \cite{BFN2025}.}  
\begin{empheq}[box=\mybluebox]{equation}
\label{auxseq} 
z_k := (1-t_k)x_k + t_k y_k,
\end{empheq}
satisfies 
$z_0 = y_0 = x_0$ and, for $\knn$, 
\begin{equation}
\label{e:xyz}
z_{k+1} = (1-t_k)x_k + t_kx_{k+1}
\end{equation}
and
$\big( 1-\tfrac{1}{t_{k+1}}\big) x_{k+1} 
+ \tfrac{1}{t_{k+1}}z_{k+1} = y_{k+1}$. 
Next, for $s\in S$ and $\knn$, we set 
\begin{empheq}[box=\mybluebox]{align}
\delta_k &:= F(x_k) - \mu = F(x_k) - F(s) \ge 0, \label{eq:delta-def}\\
\xi_{k+1}(s)&:=t_k^{2}\,\delta_{k+1}
+\tfrac{\lip}{2}\norm{z_{k+1}-s}^2. 
\label{eq:def:xi}
\end{empheq}

The following result is implicitly contained in 
many analyses of FISTA; including the original paper 
\cite{BT2009}. 
(See also 
\cite[Theorem~30.2]{BBM2023} 
and 
\cite[Proposition~1]{BFN2025}.)
We include the proof in \cref{app:zkbd} for the sake of completeness because the boundedness of $(z_k)_\knn$ 
wasn't utilized much until very recently.

\begin{fact}
\label{lem:key}
Let $x_0\in X$
 and let $(x_k)_{k\in\NN}$
 and $(y_k)_{k\in\NN}$ be the corresponding 
 FISTA sequences (see \cref{30.3}). 
 Let $s\in \fix T=S$, and recall the definitions 
 \cref{auxseq}, \cref{eq:delta-def}, and 
\cref{eq:def:xi}. 
Then
\begin{equation}
\label{e:ineq1}
0\le \cdots\le \xi_{k+1}\le \xi_{k}\le \cdots \le \xi_1\le \frac{\lip}{2}
\norm{x_0-s}^2.
 \end{equation}
Consequently, 
$(\xi_k(s))_\knn$ converges to a nonnegative real number, and 
the auxiliary sequence 
\begin{equation}
\label{e:zkbd}
\text{$(z_k)_\knn$ is bounded.}
\end{equation}
\end{fact}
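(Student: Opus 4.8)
The plan is to extract a one-step decrease of the Lyapunov-type quantity $\xi_k(s)$ by invoking the descent inequality \cref{29.4} of \cref{lem:29.2} twice per FISTA step. Fix $s\in S$ and $k\in\NN$, and recall that $x_{k+1}=Ty_k$. Applying \cref{29.4} with $(x,y,y_+)=(x_k,y_k,x_{k+1})$ gives $\delta_k-\delta_{k+1}\ge\tfrac{\lip}{2}\|x_{k+1}-x_k\|^2-\tfrac{\lip}{2}\|y_k-x_k\|^2$, and applying it with $(x,y,y_+)=(s,y_k,x_{k+1})$, using $F(s)=\mu$, gives $-\delta_{k+1}\ge\tfrac{\lip}{2}\|x_{k+1}-s\|^2-\tfrac{\lip}{2}\|y_k-s\|^2$. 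Since $t_k\ge1$ by \cref{eq:FS:i}, I multiply the first inequality by $t_k-1\ge0$, add the second, and multiply the result by $t_k>0$ to obtain
\[
t_k(t_k-1)\delta_k-t_k^2\delta_{k+1}\ \ge\ \tfrac{\lip}{2}\big(t_k(t_k-1)\|x_{k+1}-x_k\|^2+t_k\|x_{k+1}-s\|^2\big)-\tfrac{\lip}{2}\big(t_k(t_k-1)\|y_k-x_k\|^2+t_k\|y_k-s\|^2\big).
\]

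The next step is to identify both brackets on the right as shifted copies of a squared distance to $s$. Using the elementary identity $\|\lambda a+(1-\lambda)b\|^2=\lambda\|a\|^2+(1-\lambda)\|b\|^2-\lambda(1-\lambda)\|a-b\|^2$ with $\lambda=t_k$ and $(a,b)=(x_{k+1}-s,\,x_k-s)$, together with $z_{k+1}=(1-t_k)x_k+t_kx_{k+1}$ from \cref{e:xyz}, the first bracket equals $\|z_{k+1}-s\|^2+(t_k-1)\|x_k-s\|^2$; the same identity with $\lambda=t_k$ and $(a,b)=(y_k-s,\,x_k-s)$, together with $z_k=(1-t_k)x_k+t_ky_k$ from \cref{auxseq}, turns the second bracket into $\|z_k-s\|^2+(t_k-1)\|x_k-s\|^2$. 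The $(t_k-1)\|x_k-s\|^2$ contributions cancel, and after rearranging we are left with
\[
\xi_{k+1}(s)=t_k^2\delta_{k+1}+\tfrac{\lip}{2}\|z_{k+1}-s\|^2\ \le\ t_k(t_k-1)\delta_k+\tfrac{\lip}{2}\|z_k-s\|^2.
\]

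Finally I would bound the right-hand side. For $k\ge1$, \cref{eq:FS:ii} used with index $k-1$ gives $t_k(t_k-1)=t_k^2-t_k\le t_{k-1}^2$, so, since $\delta_k\ge0$, the right-hand side is at most $t_{k-1}^2\delta_k+\tfrac{\lip}{2}\|z_k-s\|^2=\xi_k(s)$, i.e.\ $\xi_{k+1}(s)\le\xi_k(s)$; for $k=0$, $t_0=1$ and $z_0=x_0$ reduce the right-hand side to $\tfrac{\lip}{2}\|x_0-s\|^2$, so $\xi_1(s)\le\tfrac{\lip}{2}\|x_0-s\|^2$. Together with $\xi_k(s)=t_{k-1}^2\delta_k+\tfrac{\lip}{2}\|z_k-s\|^2\ge0$ this is precisely the chain \cref{e:ineq1}. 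A nonincreasing sequence bounded below converges to a nonnegative real number, giving the first conclusion; and from $\tfrac{\lip}{2}\|z_k-s\|^2\le\xi_k(s)\le\tfrac{\lip}{2}\|x_0-s\|^2$ we get $\|z_k-s\|\le\|x_0-s\|$ for every $k$, which proves \cref{e:zkbd}.

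The one delicate point is the transition through the two displays: choosing the right weights for the combination of the two descent inequalities ($t_k-1$ on the first, $1$ on the second, then the overall factor $t_k$) and then performing the quadratic bookkeeping so that the stray $\|x_k-s\|^2$ terms cancel and the $z$-terms telescope. Everything after that — the index shift needed to apply \cref{eq:FS:ii}, the base case $k=0$ via $t_0=1$ and $z_0=x_0$, and deducing convergence and boundedness from a telescoping estimate — is routine.
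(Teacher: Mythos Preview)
Your proof is correct. The route is close to the paper's but not identical: the paper applies \cref{lem:29.2} \emph{once}, at the test point $x=\tfrac{1}{t_k}s+(1-\tfrac{1}{t_k})x_k$, after first using convexity of $F$ to pass from $(1-\tfrac{1}{t_k})F(x_k)+\tfrac{1}{t_k}F(s)$ to $F$ at that point; the resulting squared norms are then directly $\|z_{k+1}-s\|^2$ and $\|z_k-s\|^2$ after multiplying by $t_k^2$, with no auxiliary $\|x_k-s\|^2$ terms to cancel. You instead apply \cref{lem:29.2} twice (at $x_k$ and at $s$), combine with weights $(t_k-1,1)$, and then invoke the quadratic identity to recover the $z$-norms; this is the original Beck--Teboulle bookkeeping. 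The two are equivalent because the right-hand side of \cref{29.4} is affine in $x$, so convex-combining two instances coincides with a single instance at the combined point. Your treatment of the base case is slightly cleaner: the same one-step inequality at $k=0$ collapses (via $t_0=1$, $z_0=x_0$) to $\xi_1\le\tfrac{\lip}{2}\|x_0-s\|^2$, whereas the paper re-derives that bound by a separate application of \cref{lem:29.2} with $(x,y)=(s,x_0)$.
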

\begin{proof}
See \cref{app:zkbd}.
\end{proof}

\section{Approaching the proof of weak convergence of FISTA} 

\label{s:approch}

In this section, we collect some properties that will make 
the eventual proof of weak convergence more structured and simpler. 

\subsection{The FISTA sequence $(x_k)_\knn$ is bounded, and its weak cluster points lie in $S$}

The following result was independently discovered very 
recently (see \cite{JangRyu2025} and \cite{BFN2025}).

\begin{theorem}
\label{thm:bddfista}
The FISTA sequence $(x_k)_{k\in\NN}$ (see \cref{30.3}) 
satisfies the following: 
 \begin{enumerate}
\item
\label{thm:bddfista:i}
$(x_k)_\knn$ is bounded. 
\item 
\label{thm:bddfista:ii}
Every weak cluster point of 
$(x_k)_{k\in\NN}$ lies in $S$.
 \end{enumerate}
\end{theorem}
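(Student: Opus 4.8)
The plan is to derive both assertions directly from \cref{lem:key} and \cref{thm:30.4}, with essentially no new analytic work. For \cref{thm:bddfista:i}, fix $s\in S$. Combining the chain \cref{e:ineq1} with $\delta_{k+1}\ge 0$ gives $\tfrac{\lip}{2}\norm{z_{k+1}-s}^2\le\xi_{k+1}(s)\le\tfrac{\lip}{2}\norm{x_0-s}^2$, hence $\norm{z_k-s}\le\norm{x_0-s}$ for all $\knn$ (the case $k=0$ being immediate from $z_0=x_0$). Next I would solve the identity \cref{e:xyz} for $x_{k+1}$ to obtain $x_{k+1}=\tfrac{1}{t_k}z_{k+1}+\big(1-\tfrac{1}{t_k}\big)x_k$; since $t_k\ge 1$ by \cref{eq:FS:i}, this exhibits $x_{k+1}$ as a convex combination of $z_{k+1}$ and $x_k$. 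A one-line induction then yields $\norm{x_k-s}\le\norm{x_0-s}$ for all $\knn$: if the bound holds at $k$, the triangle inequality gives $\norm{x_{k+1}-s}\le\tfrac{1}{t_k}\norm{z_{k+1}-s}+\big(1-\tfrac{1}{t_k}\big)\norm{x_k-s}\le\norm{x_0-s}$. Thus $(x_k)_\knn$ is bounded; taking $s$ to be the projection of $x_0$ onto the nonempty closed convex set $S$ even confines every $x_k$ to the closed ball of radius $\dist{x_0}{S}$ about that projection.

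For \cref{thm:bddfista:ii}, let $x$ be a weak cluster point of $(x_k)_\knn$, so that $x_{k_n}\weakly x$ along some subsequence; such cluster points exist by part \cref{thm:bddfista:i}, since bounded sequences in a Hilbert space admit weakly convergent subsequences. As $f$ is convex and continuous and $g$ is convex, proper, and lower semicontinuous, $F=f+g$ is convex, proper, and lower semicontinuous, hence weakly sequentially lower semicontinuous. Consequently
\begin{equation}
\mu\le F(x)\le\liminf_{n\to\infty}F(x_{k_n})=\mu,
\end{equation}
where the first inequality expresses $\mu=\min F(X)$ and the final equality is \cref{eq:300} from \cref{thm:30.4}. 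Therefore $F(x)=\mu$, i.e., $x\in\argmin F=S$.

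I do not expect a real obstacle here, precisely because the heavy lifting was already done in \cref{lem:key}. The one step in \cref{thm:bddfista:i} that warrants a moment of care is noticing the convex-combination structure concealed in \cref{e:xyz} — this is exactly where the normalization $t_k\ge 1$ of \cref{eq:FS:i} enters — while in \cref{thm:bddfista:ii} the only outside input is the textbook fact that a proper convex lower semicontinuous function on a Hilbert space is weakly sequentially lower semicontinuous. Everything else reduces to the Lyapunov-type monotonicity \cref{e:ineq1}, which is what makes $(z_k)_\knn$, and then $(x_k)_\knn$, bounded.
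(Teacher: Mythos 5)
Your proposal is correct and follows essentially the same route as the paper: part (i) rests on the boundedness of $(z_k)_\knn$ from \cref{lem:key}, the convex-combination identity $x_{k+1}=\big(1-\tfrac{1}{t_k}\big)x_k+\tfrac{1}{t_k}z_{k+1}$, and an induction, while part (ii) is the identical weak-lower-semicontinuity argument combined with \cref{eq:300}. The only (harmless, slightly sharper) variation is that you track $\norm{x_k-s}\le\norm{x_0-s}$ for a fixed $s\in S$ rather than bounding $\norm{x_k}$ by $\max\{\norm{x_0},\sup_k\norm{z_k}\}$ as the paper does.
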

\begin{proof}
\cref{thm:bddfista:i}: 
Solving \cref{e:xyz} for $x_{k+1}$ 
reveals 
\begin{equation}
x_{k+1} = \Big( 1-\frac{1}{t_k}\Big) x_k + \frac{1}{t_k}z_{k+1}
\end{equation}
as a convex combination of $x_k$ and $z_{k+1}$. 
Now $(z_k)_\knn$ is bounded (see \cref{e:zkbd}), 
say $\sigma := \sup_{\knn}\|z_k\| <+\infty$; thus, 
\cref{eq:FS:i} and the convexity of the norm yield
\begin{equation}
(\forall\knn)\quad 
\|x_{k+1}\| \leq \Big(1-\frac{1}{t_k} \Big)\|x_k\|+
\frac{1}{t_k}\|z_{k+1}\|
\leq\max\{ \|x_k\|,\sigma\}.
\end{equation}
Inductively, we see that $(\forall\knn)$
$\|x_k\| \leq \max\{ \|x_0\|,\sigma\}<+\infty$. 

\cref{thm:bddfista:ii}:
Let $\overline{x}$
be a weak cluster point of
$(x_k)_{k\in\NN}$, say $x_{n_k}\weakly \overline{x}$.
The (weak) lower semicontinuity of $F$
and \cref{eq:300}
imply 
\begin{equation}
   \mu\le F(\overline{x})\le 
   \varliminf_{k\to\infty}\ F(x_{n_k}) =
   \lim_{k\to\infty} F(x_{n_k})=
   \lim_{k\to\infty} F(x_{k})= \mu.
\end{equation}
Therefore, $\mu = F(\overline{x})$ and hence 
$\overline{x}\in S$. 
\end{proof}

\subsection{A note on the parameter sequence $(t_k)_\knn$}

\begin{lemma}
\label{l:Ryu} 
The FISTA parameter sequence $(t_k)_{\knn}$ 
(see \cref{30.2}) satisfies $1 \leq t_k-1\leq k$ for $k\geq 2$. 
Consequently, 
\begin{equation}
\sum_{k=2}^\infty \frac{1}{t_k-1} \geq 
\sum_{k=2}^\infty \frac{1}{k} = +\infty. 
\end{equation}
\end{lemma}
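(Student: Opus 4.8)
The plan is to prove the two-sided bound $1 \le t_k - 1 \le k$ for $k \ge 2$ directly by induction on $k$, exploiting the two defining properties \cref{eq:FS:i} and \cref{eq:FS:ii} of the parameter sequence. The lower bound is immediate and requires no induction: \cref{eq:FS:i} gives $t_k \ge \tfrac{k+2}{2}$, so $t_k - 1 \ge \tfrac{k}{2} \ge 1$ for all $k \ge 2$. The content is in the upper bound $t_k \le k+1$, and here I would use \cref{eq:FS:ii}, which I would first rewrite in the more convenient form $t_{k+1}^2 - t_{k+1} \le t_k^2$, i.e. $t_{k+1}(t_{k+1}-1) \le t_k^2$.

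For the upper bound, the base case is $k=2$: from $t_1 \ge t_0 = 1$ (and in fact one needs a concrete handle on $t_1$ and $t_2$). Here I would use \cref{eq:FS:ii} with $k=1$: $t_2(t_2-1) \le t_1^2$. To make this useful I need an upper bound on $t_1$; the cleanest route is to observe that the hypotheses \cref{30.2} do \emph{not} by themselves pin down $t_1$, so the statement must implicitly be using \cref{eq:FS:ii} at $k=0$: $t_1^2 - t_1 \le t_0^2 = 1$, whence $t_1 \le \tfrac{1+\sqrt5}{2} < 2$. Then $t_2(t_2-1) \le t_1^2 < 4$ forces $t_2 < \tfrac{1+\sqrt{17}}{2} < 3 = k+1$, establishing the base case. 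For the inductive step, assume $t_k \le k+1$ for some $k \ge 2$; then \cref{eq:FS:ii} gives $t_{k+1}(t_{k+1}-1) \le t_k^2 \le (k+1)^2$. I want to conclude $t_{k+1} \le k+2$; since $x \mapsto x(x-1)$ is increasing on $[1,\infty)$ and $t_{k+1} \ge 1$, it suffices to check $(k+2)(k+1) \ge (k+1)^2$, which holds since $k+2 \ge k+1$. This closes the induction, giving $t_k - 1 \le k$ for all $k \ge 2$. Finally, combining $t_k - 1 \le k$ with $t_k - 1 \ge 1 > 0$ gives $\tfrac{1}{t_k-1} \ge \tfrac{1}{k}$, and the divergence of $\sum 1/k$ yields the stated divergence of $\sum 1/(t_k-1)$.

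The one place to be careful — and the likely main obstacle — is the base case and, more precisely, whether \cref{eq:FS:ii} is meant to hold for all $k \in \mathbb{N}$ including $k=0$ (the paper writes ``for every $k\in\mathbb N$'', and with $t_0 = 1$ this does constrain $t_1$); if instead one only had $t_1 \ge t_0 = 1$ with no upper bound, the claim would be false, so the proof must invoke \cref{eq:FS:ii} at the smallest admissible indices. Granting that, everything reduces to the elementary monotonicity of $x(x-1)$ on $[1,\infty)$ together with the arithmetic inequality $(k+2)(k+1)\ge(k+1)^2$, both of which are routine. No deep tool is needed; the proof is a clean two-step induction.
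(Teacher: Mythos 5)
Your proof is correct and follows essentially the same route as the paper: the lower bound comes straight from \cref{eq:FS:i}, and the upper bound $t_k\le k+1$ is extracted from the quadratic recursion \cref{eq:FS:ii} starting at $t_0=1$ (the paper packages this as $t_{k+1}\le\tfrac{1+\sqrt{1+4t_k^2}}{2}\le 1+t_k$ and telescopes, while you induct on $t_k\le k+1$ via the monotonicity of $x\mapsto x(x-1)$ — a cosmetic difference). Your worry about the base case is resolved exactly as you suspect (\cref{eq:FS:ii} holds for all $k\in\mathbb{N}$ including $k=0$), and in fact your induction could start harmlessly at $k=0$ with $t_0=1\le 0+1$, avoiding the separate computation for $t_1,t_2$.
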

\begin{proof}
Let $k\geq 2$. 
By \cref{eq:FS:i}, $t_k\geq (k+2)/2\geq (2+2)/2=2$ and so 
$t_k-1\geq 1$. 
On the other hand, 
by \cref{eq:FS:ii}, 
$t_{k+1} \leq \tfrac{1+\sqrt{1+4t_k^2}}{2} \leq 
\tfrac{1+(1+2t_k)}{2} = 1+t_k$.
Consequently, $t_k\leq k+t_0=k+1$. 
Altogether, $1\leq t_k-1\leq k$ and the result follows
(compare to the harmonic series). 
\end{proof}

\section{The Bo\c{t}-Chenchene-Csetnek-Hulett (BCCH) Lemma} 

\label{s:BCCH}

\subsection{The BCCH Lemma: statement and proof}

The following result is due to 
Bo\c{t}, Chenchene, Csetnek, and Hulett 
(see \cite[Lemma~A.4]{BCCH}). 
We present a slight extension (allowing the limit to be 
$\pm\infty$) with a slightly different, more elementary and detailed, proof. 

\begin{lemma}[the BCCH Lemma] 
\label{l:Radu}
Let $(\varphi_k)_\knn$ be a sequence of positive numbers, 
and let $(h_k)_\knn$ be a sequence of real numbers such that 
the following hold:
\begin{equation}
\varphi_k>0,\quad 
\sum_{k\in\NN} \frac{1}{\varphi_k} = +\infty, \quad
\text{and}\quad 
g_k := h_{k+1} + \varphi_k(h_{k+1}-h_k)\to \ell\in
[-\infty,+\infty]. 
\end{equation}
Then 
\begin{equation}
h_k \to \ell.
\end{equation}
\end{lemma}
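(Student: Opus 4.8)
The plan is to collapse the three-term recursion into a one-step convex combination and then run the classical ``averaging toward a moving target'' argument. Expanding the definition shows $g_k=(1+\varphi_k)h_{k+1}-\varphi_k h_k$, so, putting $\lambda_k:=1/(1+\varphi_k)\in\opint{0,1}$, we obtain
\[
h_{k+1}=(1-\lambda_k)h_k+\lambda_k g_k ,
\]
i.e.\ $h_{k+1}$ is a convex combination of the previous value and the ``input'' $g_k$. The lemma is thereby reduced to the statement: if $\lambda_k\in\opint{0,1}$, $\sum_{\knn}\lambda_k=+\infty$, and $g_k\to\ell\in[-\infty,+\infty]$, then $h_k\to\ell$.

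Then I would record two elementary facts. (a) \emph{Divergence transfers:} $\sum_{\knn}\lambda_k=+\infty$. Split $\NN$ into the indices with $\varphi_k\le1$ (on which $\lambda_k\ge\tfrac12$) and those with $\varphi_k>1$ (on which $\lambda_k>\tfrac1{2\varphi_k}$); if the first set is infinite the series diverges over it, and otherwise the hypothesis $\sum 1/\varphi_k=+\infty$ forces divergence over the (cofinite) second set. (b) \emph{Products vanish:} for every fixed $N$, the bound $1-t\le e^{-t}$ gives $\prod_{j=N}^{k}(1-\lambda_j)\le\exp\!\big(-\sum_{j=N}^{k}\lambda_j\big)\to 0$ as $k\to\infty$, by (a).

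The engine is the identity $h_{k+1}-c=(1-\lambda_k)(h_k-c)+\lambda_k(g_k-c)$, valid for any $c\in\RR$. Suppose $g_k-c\le\varepsilon$ for all $k\ge N$. Then $h_{k+1}-c-\varepsilon\le(1-\lambda_k)(h_k-c-\varepsilon)$, and multiplying successively by the positive numbers $1-\lambda_j$ — which preserves the inequality — yields
\[
h_{k+1}-c-\varepsilon\ \le\ \Big(\prod_{j=N}^{k}(1-\lambda_j)\Big)(h_N-c-\varepsilon)\ \to\ 0\qquad(k\to\infty),
\]
the sign of the factor $h_N-c-\varepsilon$ being immaterial since the product tends to $0$. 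Hence $\limsup_k(h_k-c)\le\varepsilon$; symmetrically, $g_k-c\ge-\varepsilon$ eventually gives $\liminf_k(h_k-c)\ge-\varepsilon$. Now finish by cases. If $\ell\in\RR$: take $c=\ell$, fix $\varepsilon>0$, use $\abs{g_k-\ell}\le\varepsilon$ eventually, and let $\varepsilon\downarrow0$ to get $h_k\to\ell$. If $\ell=+\infty$: fix $M\in\RR$, use $g_k\ge M$ eventually with $c=M$ and $\varepsilon=0$ to get $\liminf_k h_k\ge M$, then let $M\to+\infty$. The case $\ell=-\infty$ follows by applying the previous case to $(-h_k)_\knn$ and $(-g_k)_\knn$.

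Nothing here is deep; the only spots that need a moment's care are the divergence-transfer step (a) and the iteration of the one-sided inequality, where one must note that each step multiplies both sides by $1-\lambda_j\in\opint{0,1}$ (legitimate whatever the signs) and that the vanishing of $\prod_{j=N}^{k}(1-\lambda_j)$ swallows the possibly large, possibly wrong-signed initial term $h_N-c-\varepsilon$. Fitting the values $\ell=\pm\infty$ into the same template is the modest ``extension'' flagged in the statement and costs nothing beyond bookkeeping.
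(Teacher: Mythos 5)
Your proof is correct, and while it starts from the same reduction as the paper --- rewriting the hypothesis as the convex-combination recursion $h_{k+1}=(1-\lambda_k)h_k+\lambda_k g_k$ with $\lambda_k=1/(1+\varphi_k)$, transferring divergence from $\sum 1/\varphi_k$ to $\sum\lambda_k$, and noting that $\prod_{j=N}^{k}(1-\lambda_j)\to 0$ --- it extracts the limit by a genuinely different mechanism. The paper unrolls the recursion completely into an explicit weighted average, $h_n=\sum_{k=0}^{n-1}w_{n,k}g_k+h_0\prod_{j=0}^{n-1}\lambda_j$ with $w_{n,k}\geq 0$, $\sum_k w_{n,k}\to 1$, $w_{n,k}\to 0$ for fixed $k$, and then runs a Silverman--Toeplitz-type $\varepsilon$-split over the weights; the infinite-limit cases are handled by a separate hurdle argument in an appendix. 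You instead keep the recursion one-sided: from $g_k-c\le\varepsilon$ eventually you get $h_{k+1}-c-\varepsilon\le(1-\lambda_k)(h_k-c-\varepsilon)$, iterate, and let the vanishing product absorb the initial term, obtaining $\limsup_k(h_k-c)\le\varepsilon$ and its mirror image. This comparison argument is shorter, avoids constructing and bookkeeping the weights $w_{n,k}$, and --- by taking $c=M$, $\varepsilon=0$ --- treats $\ell\in\RR$ and $\ell=\pm\infty$ in a single template, whereas the paper's weighted-average identity has the compensating virtue of being an exact representation of $h_n$ that makes the ``averaging of $g_k$'' structure explicit. Your two auxiliary steps (the index split for divergence transfer, and the remark that multiplying a possibly negative quantity by $1-\lambda_j>0$ preserves the inequality) are exactly the points that need care, and you have handled both correctly.
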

\begin{proof}
{\color{black}\textbf{Step~1: Collecting properties 
related solely to $(\varphi_k)_\knn$}
}

On the one hand, 
the divergent-series assumption implies
\begin{equation}
\sum_\knn \min\big\{1,\tfrac{1}{\varphi_k}\big\} 
= +\infty. 
\end{equation}
On the other hand, 
$1+\varphi_k \leq 2\max\{1,\varphi_k\}$ and so 
\begin{equation}
\frac{1}{1+\varphi_k} \geq 
\frac{1}{2\max\{1,\varphi_k\}} = 
\tfrac{1}{2} \min\big\{1,\tfrac{1}{\varphi_k}\big\}. 
\end{equation}
Altogether, 
\begin{equation}
\label{261012a}
\sum_{\knn}
\frac{1}{1+\varphi_k} = +\infty. 
\end{equation}
Now set 
\begin{equation}
\lambda_k := \frac{\varphi_k}{1+\varphi_k} \in \zeroun;
\;\;\text{thus, }
1-\lambda_k = \frac{1}{1+\varphi_k}\in\zeroun.
\end{equation}
Hence \cref{261012a} yields 
\begin{equation}
\label{261012b}
\sum_{\knn}
(1-\lambda_k) = +\infty. 
\end{equation}
{\color{black}
Because $\ln(1-x)\leq -x$ for $x\in \left[0,1\right[$, 
we deduce from \cref{261012b} that 
}
for every $m\in\NN$, we have 
\begin{equation}
\ln\Big(\prod_{j=m}^n \lambda_j\Big)
= \sum_{j=m}^n \ln \big(1-(1-\lambda_j)\big)
\leq -\sum_{j=m}^n (1-\lambda_j) 
\to -\infty \quad \text{as $n\to\infty$.}
\end{equation}
In turn, this yields 
\begin{equation}
\label{260112c}
(\forall m\in\NN) \quad 
\lim_{n\to\infty} \prod_{j=m}^n \lambda_j = 0.
\end{equation}
Now define 
\begin{equation}
\label{260118a}
(\forall n\in\NN)(\forall k\in\NN)\quad 
w_{n,k} := 
(1-\lambda_k)\prod_{j=k+1}^{n-1} \lambda_j > 0, 
\end{equation}
and note that \cref{260112c} yields 
\begin{equation}
\label{260112h}
(\forall\knn) 
\quad 
\lim_{n\to\infty} w_{n,k} = 0. 
\end{equation}
Telescoping 
$w_{n,k} = \prod_{j=k+1}^{n-1}\lambda_j 
- \prod_{j=k}^{n-1}\lambda_j$
from $k=n-1$ down to  $k=0$ gives
\begin{align}
\label{260112i}
\sum_{k=0}^{n-1} w_{n,k} 
&= \prod_{j=n}^{n-1}\lambda_j 
- \prod_{j=0}^{n-1}\lambda_j
= 1 - \prod_{j=0}^{n-1}\lambda_j,
\end{align}
which implies, by using again \cref{260112c}, 
\begin{equation}
\label{260112f}
\lim_{n\to\infty} \sum_{k=0}^{n-1}w_{n,k} = 1.
\end{equation}
{\color{black}
\textbf{Step~2: Involving the sequences $(g_k)_\knn$  
and  $(h_k)_\knn$} 
}

First, \cref{260118a} yields 
\begin{equation}
\label{260112d}
\sum_{k=0}^{n} w_{n+1,k} g_k 
= (1-\lambda_n)g_n + 
\lambda_n \sum_{k=0}^{n-1} w_{n,k} g_k. 
\end{equation}
Next, the definition of $g_k$ yields 
$g_k + \varphi_kh_k = (1+\varphi_k)h_{k+1}$; hence, 
\begin{equation}
\label{260118b}
h_{k+1} = (1-\lambda_k)g_k + \lambda_k h_k. 
\end{equation}
Using \cref{260118b}, \cref{260112d}, and an induction on $n$, we obtain  
\begin{equation}
\label{260112e}
h_n = \sum_{k=0}^{n-1}w_{n,k}g_k + h_0\prod_{j=0}^{n-1}\lambda_j. 
\end{equation}

\textbf{Case~1: $\ell\in\RR$}\\
Using \cref{260112e}, we write
\begin{equation}
\label{260112g}
h_n -\ell = 
h_0\prod_{j=0}^{n-1}\lambda_j
+\ell\Big( -1+\sum_{k=0}^{n-1}w_{n,k}\Big)
+ \sum_{k=0}^{n-1}w_{n,k}(g_k-\ell). 
\end{equation}
Recalling \cref{260112c} and \cref{260112f}, we note that 
\begin{equation}
\label{260112j}
\lim_{n\to\infty} h_0\prod_{j=0}^{n-1}\lambda_j = 0 
\quad\text{and}\quad
\lim_{n\to\infty} \ell\Big( -1+\sum_{k=0}^{n-1}w_{n,k}\Big) = 0. 
\end{equation}
We now turn to the right-most sum in \cref{260112g}. 
Let $\varepsilon > 0$. 
Grab $K\in\NN$ such that 
$(\forall k\geq K)$
$|g_k-\ell|\leq\varepsilon$. 
For $n\geq K$, we estimate
\begin{equation}
\Big|\sum_{k=0}^{n-1}w_{n,k}(g_k-\ell) \Big|
\leq 
\sum_{k=0}^{K-1} w_{n,k}|g_k-\ell| 
+ \varepsilon \sum_{k=K}^{n-1}w_{n,k}.
\end{equation}
When $0\leq k \leq K-1$, we have 
$\lim_{n\to\infty} w_{n,k}=0$ by 
\cref{260112h}; thus, 
$\lim_{n\to\infty}\sum_{k=0}^{K-1} w_{n,k}|g_k-\ell| = 0$. 
On the other hand, 
$0\leq \varepsilon \sum_{k=K}^{n-1}w_{n,k}\leq \varepsilon$ by 
\cref{260112i}. 
Altogether, 
$\lim_{n\to\infty}|\sum_{k=0}^{n-1}w_{n,k}(g_k-\ell) |\leq \varepsilon$. 
Because $\varepsilon$ was chosen arbitrarily, we deduce that 
\begin{equation}
\label{260112k}
\lim_{n\to\infty} \sum_{k=0}^{n-1}w_{n,k}(g_k-\ell) = 0.
\end{equation}
Combining 
\cref{260112j}, \cref{260112k}, and \cref{260112g}, we deduce that $\lim_{n\to\infty}(h_n-\ell)=0$, and we are done. 

\textbf{Case~2: $\ell=\pm\infty$}\\
While cute, this case is not needed in the sequel so 
we put the proof in \cref{app:ellpm}. 
\end{proof}

\subsection{Optional Bonus material on the BCCH Lemma}

Readers intrigued by \cref{l:Radu} will enjoy the following
collection of limiting examples in this optional subsection.

\begin{example}[converse of the BCCH Lemma fails with $(g_k)_\knn$ bounded]
For $k\geq 1$, set $\varphi_k := k>0$, 
let $\ell\in\RR$, and set 
\begin{equation}
h_k := \ell + \frac{(-1)^k}{k} 
\quad\text{and}\quad
g_k := h_{k+1} + \varphi_k\big(h_{k+1}-h_{k}\big). 
\end{equation}
Then $h_k\to \ell$, yet 
$g_k =  \ell+(-1)^{k+1}2 \not\to\ell$. 
\end{example}

\begin{example}[converse of the BCCH Lemma fails with $(g_k)_\knn$ unbounded]
For $k\geq 1$, set $\varphi_k := k>0$, 
let $\ell\in\RR$, and set 
\begin{equation}
h_k := \ell + \frac{(-1)^k}{\sqrt{k}} 
\quad\text{and}\quad
g_k := h_{k+1} + \varphi_k\big(h_{k+1}-h_{k}\big). 
\end{equation}
Then $h_k\to \ell$, yet 
$g_k = \ell+(-1)^{k+1}
\big(\sqrt{k+1}+\sqrt{k}\big)
\not\to\ell$. 
\end{example}

\begin{example}[importance of the divergent-series 
condition in the BCCH Lemma]
\ \\
\label{ex:sinh}
For $k\geq 1$, set 
$\varphi_k := k^2$ so that 
$\sum_{k=1}^{\infty} 1/\varphi_k < +\infty$. 
Now set $h_1 := 1$ and, for $k\geq 1$, 
\begin{equation}
h_{k+1} := \frac{\varphi_k}{1+\varphi_k}h_k 
\quad\text{and}\quad
g_k 
:= 
h_{k+1}+\varphi_k(h_{k+1}-h_k)
\end{equation}
Then 
$g_k\equiv \ell=0$ but 
$h_k \to \pi/\sinh(\pi)\approx 0.272 > 0 = \ell$.
\end{example}
\begin{proof}
See \cref{app:sinh} for details.
\end{proof}

\section{Beyond Fej\'er and Opial: Salzo sequences}

\label{s:Salzo}

\subsection{Bounded Salzo sequences are weakly convergent}

Soon after the submission of \cite{JangRyu2025} and 
\cite{BFN2025}, 
Saverio Salzo presented an extension to allow for 
inexact computations and stochastic gradients. 
His proof makes use 
of a generalization of Opial's Lemma
(see \cite[Remark~2.5]{Salzo}). 
This motivates the following: 

\begin{definition}[Salzo sequence] 
We say that a sequence $(x_k)_\knn$ in $X$ 
is a \emph{Salzo sequence} if 
\begin{equation}
\label{260113a}
(\scal{x_k}{w_1-w_2})_\nnn 
\;\;\text{converges,}
\end{equation}
whenever 
$w_1,w_2$ are weak cluster points of $(x_k)_\knn$
\end{definition}

\begin{proposition}
\label{p:Salzo}
Let $(x_k)_\knn$ be a bounded Salzo sequence. 
Then $(x_k)_\knn$ is weakly convergent. 
\end{proposition}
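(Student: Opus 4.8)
The plan is to mimic the classical Opial argument but replace the Fej\'er-monotonicity ingredient with the Salzo property. Since $(x_k)_\knn$ is bounded and $X$ is a Hilbert space, it has at least one weak cluster point, so the weak $\omega$-limit set $W := \{w \mid w \text{ is a weak cluster point of }(x_k)_\knn\}$ is nonempty. It suffices to show that $W$ is a singleton: a bounded sequence in a Hilbert space with exactly one weak cluster point converges weakly to that point (if not, some subsequence stays outside a weak neighborhood of the unique cluster point, but that subsequence, being bounded, would have a further weakly convergent subsequence whose limit is a second cluster point — contradiction).

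So the key step is: assume for contradiction that $w_1, w_2 \in W$ with $w_1 \neq w_2$. By definition of weak cluster point, pick subsequences $x_{m_k} \weakly w_1$ and $x_{n_k} \weakly w_2$. Apply the Salzo property \cref{260113a} with this pair $w_1, w_2$: the \emph{whole} sequence $(\scal{x_k}{w_1 - w_2})_\nnn$ converges, to some limit $\ell \in \RR$. Now evaluate this limit along the two subsequences: along $(m_k)$ we get $\scal{x_{m_k}}{w_1-w_2} \to \scal{w_1}{w_1-w_2}$, and along $(n_k)$ we get $\scal{x_{n_k}}{w_1-w_2} \to \scal{w_2}{w_1-w_2}$. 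Since both subsequential limits must equal $\ell$, we obtain $\scal{w_1}{w_1-w_2} = \scal{w_2}{w_1-w_2}$, i.e. $\scal{w_1 - w_2}{w_1-w_2} = \|w_1-w_2\|^2 = 0$, forcing $w_1 = w_2$, a contradiction.

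Therefore $W$ is a singleton $\{\bar x\}$, and by the remark above $(x_k)_\knn \weakly \bar x$. The only points requiring a little care are the two "soft" facts about bounded sequences in Hilbert space — existence of a weak cluster point (bounded sets are weakly sequentially compact, by reflexivity and the Eberlein–\v{S}mulian theorem, or more elementarily via the separable-subspace trick) and the "one cluster point $\Rightarrow$ weak convergence" principle — both of which are standard and can be cited from \cite{BC2017}. I expect no genuine obstacle here; the entire content of the proposition is the clean algebraic identity extracted from the Salzo hypothesis, and the rest is routine functional-analytic bookkeeping.
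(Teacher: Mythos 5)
Your proof is correct and follows essentially the same route as the paper: both arguments take two weak cluster points $w_1,w_2$, pass to the limit of $(\scal{x_k}{w_1-w_2})_\knn$ along the two subsequences, and deduce $\|w_1-w_2\|^2=0$. You merely spell out the standard functional-analytic bookkeeping (existence of a weak cluster point and the ``unique cluster point implies weak convergence'' principle) that the paper leaves implicit in its closing ``and we're done.''
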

\begin{proof}
Let $w_1,w_2$ be two (possibly different) 
weak cluster points of $(x_\knn)$, 
say $x_{m_k}\weakly w_1$ and $x_{n_k} \weakly w_2$. 
By assumption, 
$\ell= \lim_{k\to\infty} \scal{x_k}{w_1-w_2}$ exists. 
Going along the two subsequences yields 
$\scal{w_1}{w_1-w_2}=
\lim_{k\to\infty} \scal{x_{m_k}}{w_1-w_2} =
\ell=
\lim_{k\to\infty} \scal{x_{n_k}}{w_1-w_2} =
\scal{w_2}{w_1-w_2}$.
It follows that $0=\ell-\ell = \scal{w_1-w_2}{w_1-w_2}=
\|w_1-w_2\|^2$, and we're done.
\end{proof}

\subsection{Optional Bonus material on Salzo sequences}

\begin{proposition}
Let $(x_k)_\knn$ be a sequence that is 
Fej\'er monotone with respect to some nonempty subset 
$C$ of $X$, i.e., 
$(\forall c\in C)$ $(\forall \knn)$
$\|x_{k+1}-c\|\leq\|x_k-c\|$.
If all weak cluster points of $(x_k)_\knn$ lie in $C$, then
$(x_k)_\knn$ is a bounded Salzo sequence 
(and hence weakly convergent to some point in $C$). 
\end{proposition}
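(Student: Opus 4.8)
The plan is to verify directly that $(x_k)_\knn$ satisfies the two hypotheses of \cref{p:Salzo} — boundedness and the Salzo property — and then to read off weak convergence from that proposition, identifying the limit afterwards.

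First I would establish boundedness: since $C$ is nonempty, fix some $c\in C$; Fej\'er monotonicity then makes $(\|x_k-c\|)_\knn$ nonincreasing, hence bounded, so $(x_k)_\knn$ is bounded. Next, to verify the Salzo property, take two weak cluster points $w_1,w_2$ of $(x_k)_\knn$. The key point is that, by assumption, $w_1$ and $w_2$ both lie in $C$; therefore Fej\'er monotonicity applies with $c=w_1$ and with $c=w_2$, so both $(\|x_k-w_1\|^2)_\knn$ and $(\|x_k-w_2\|^2)_\knn$ are nonincreasing and hence convergent. Expanding the norms gives the identity
\[
\scal{x_k}{w_1-w_2} = \tfrac12\big(\|x_k-w_2\|^2 - \|x_k-w_1\|^2\big) + \tfrac12\big(\|w_1\|^2-\|w_2\|^2\big),
\]
whose right-hand side converges as $k\to\infty$; hence $(\scal{x_k}{w_1-w_2})_\knn$ converges, which is precisely the defining condition \cref{260113a}. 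Thus $(x_k)_\knn$ is a bounded Salzo sequence.

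Applying \cref{p:Salzo} then yields $x_k\weakly\overline{x}$ for some $\overline{x}\in X$. Since $\overline{x}$ is in particular a weak cluster point of $(x_k)_\knn$, the hypothesis forces $\overline{x}\in C$, which completes the argument.

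There is essentially no obstacle here: the proof is a routine repackaging of Opial's classical argument, and the only thing to be careful about is that the Fej\'er inequality is invoked \emph{at} the cluster points $w_1,w_2$ — which is legitimate exactly because the hypothesis places all weak cluster points in $C$ — together with the elementary polarization-type identity relating a difference of squared distances to an inner product.
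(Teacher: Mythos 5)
Your proof is correct and follows essentially the same route as the paper's: both arguments note that the hypothesis places $w_1,w_2$ in $C$, so Fej\'er monotonicity makes $(\|x_k-w_1\|^2)_\knn$ and $(\|x_k-w_2\|^2)_\knn$ convergent, whence the difference (and thus $(\scal{x_k}{w_1-w_2})_\knn$ via the polarization identity) converges, and \cref{p:Salzo} applies. Your version merely spells out the boundedness step and the expansion of the norms, which the paper leaves implicit.
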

\begin{proof}
If $w_1,w_2$ are weak cluster points of 
$(x_k)_\knn$, then 
$(\|x_k-w_1\|^2-\|x_k-w_2\|^2)_\knn$ is convergent 
and so \cref{p:Salzo} applies.
\end{proof}

The following result features an assumption that 
generalizes the Salzo property.

\begin{proposition}
\label{p:bonusSalzo}
Let $(x_k)_\knn$ be a sequence in $X$, and 
let $C$ be a nonempty subset of $X$ such that 
\begin{equation}
(\scal{x_k}{c})_\knn 
\;\;\text{converges for all $c\in C$.}
\end{equation}
Set $Y := \cspan C$. 
Then $(P_Yx_k)_\knn$ converges weakly to some point 
$\overline{y}\in Y$. 
Moreover, $(\forall y\in Y)$
$\lim_{k\to\infty} \scal{x_k}{y} = \scal{\overline{y}}{y}$. 
\end{proposition}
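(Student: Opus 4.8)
The plan is to reduce everything to the already-established \cref{p:Salzo} by transporting the problem into the closed subspace $Y := \cspan C$. First I would show that the sequence of projections $(P_Y x_k)_\knn$ is bounded: since $(\scal{x_k}{c})_\knn$ converges for each $c \in C$, in particular it is bounded for each $c$, and by linearity $(\scal{x_k}{y})_\knn$ is bounded for every $y$ in $\spn C$; the Uniform Boundedness Principle applied to the family of functionals $y \mapsto \scal{x_k}{y}$ on the Banach space $Y$ (or directly to $\scal{x_k}{\cdot}$ restricted to $Y$) then forces $\sup_k \norm{P_Y x_k} < +\infty$, because $\scal{P_Y x_k}{y} = \scal{x_k}{y}$ for $y \in Y$ and $\norm{P_Y x_k} = \sup_{y \in Y, \norm{y}\le 1}\scal{x_k}{y}$.

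Next I would verify that $(P_Y x_k)_\knn$ is a Salzo sequence \emph{in the Hilbert space $Y$}. Let $w_1, w_2$ be weak cluster points of $(P_Y x_k)_\knn$; both lie in $Y$, so $w_1 - w_2 \in Y = \cspan C$. The hypothesis gives convergence of $(\scal{x_k}{c})_\knn$ for each $c \in C$, hence by linearity of $c \mapsto \scal{x_k}{c}$ and by a density-plus-uniform-boundedness argument (using the bound from the previous step), $(\scal{x_k}{y})_\knn$ converges for every $y \in \spn C$ and then for every $y \in Y = \overline{\spn C}$. In particular $(\scal{P_Y x_k}{w_1 - w_2})_\knn = (\scal{x_k}{w_1 - w_2})_\knn$ converges, which is exactly the Salzo condition for $(P_Y x_k)_\knn$. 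Applying \cref{p:Salzo} inside $Y$ yields a point $\overline{y} \in Y$ with $P_Y x_k \weakly \overline{y}$.

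Finally, for the ``moreover'' claim: fix $y \in Y$. Weak convergence $P_Y x_k \weakly \overline{y}$ gives $\scal{P_Y x_k}{y} \to \scal{\overline{y}}{y}$, and since $\scal{P_Y x_k}{y} = \scal{x_k}{y}$ for $y \in Y$, we conclude $\scal{x_k}{y} \to \scal{\overline{y}}{y}$, completing the proof.

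The main obstacle I anticipate is the boundedness step: without assuming $(x_k)_\knn$ itself is bounded, one genuinely needs a Baire-category / Uniform Boundedness argument to pass from pointwise boundedness of $(\scal{x_k}{c})_\knn$ on $C$ to uniform boundedness of $(P_Y x_k)_\knn$, and one must be a little careful that the relevant functionals live on the Banach space $Y$ (so that completeness is available) and that $\norm{P_Y x_k}$ is correctly recovered as the operator norm of $\scal{x_k}{\cdot}|_Y$. Everything after that is routine: density of $\spn C$ in $Y$ together with the uniform bound upgrades convergence of $(\scal{x_k}{\cdot})$ from $C$ to all of $Y$, and then \cref{p:Salzo} does the rest.
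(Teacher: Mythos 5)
Your overall strategy (transport the problem into $Y$, establish boundedness of $(P_Yx_k)_\knn$, upgrade convergence from $C$ to $Y$, then conclude via \cref{p:Salzo} or Riesz) is the natural one, but the step you yourself flag as the main obstacle is where the argument genuinely breaks: the Uniform Boundedness Principle cannot be applied as you propose. Banach--Steinhaus requires the family of functionals $y\mapsto\scal{x_k}{y}$ to be pointwise bounded on all of the Banach space $Y$ (or at least on a nonmeager subset of it), whereas the hypothesis only gives pointwise boundedness on $C$, hence by linearity on $\spn C$. That subspace is dense in $Y$ but in general neither closed nor nonmeager (think of the finitely supported sequences in $\ell^2$), so the completeness you need is simply not available. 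The same circularity infects your ``density-plus-uniform-boundedness'' upgrade of convergence from $\spn C$ to $Y$: it presupposes the very bound you are trying to establish.

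This is not a repairable slip, because the proposition as stated is false. Take $X=\ell^2$, let $C=\{e_n\}_{n\geq 1}$ be the standard orthonormal basis, and set $x_k:=k\,e_k$. For each fixed $n$ the sequence $\scal{x_k}{e_n}=k\,\delta_{kn}$ is eventually $0$, so the hypothesis holds; yet $Y=\cspan C=\ell^2$, $P_Yx_k=x_k$ satisfies $\norm{x_k}=k\to+\infty$, and an unbounded sequence cannot converge weakly (indeed $\scal{x_k}{y}$ diverges for suitable $y\in\ell^2$). To be fair, the paper's own proof suffers from essentially the same defect: it extends convergence from $\spn C$ to $Y$ ``by continuity of the inner product'' and only afterwards invokes Banach--Steinhaus, so you are in good company. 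The statement (and both arguments) become correct once one adds the hypothesis that $(x_k)_\knn$ is bounded --- which is how the result would be used anyway: then the functionals $\scal{x_k}{\cdot}$ restricted to $Y$ are uniformly norm-bounded by $\sup_k\norm{x_k}$, a standard $\varepsilon/3$ argument transfers convergence from the dense subspace $\spn C$ to all of $Y$, and either your reduction to \cref{p:Salzo} inside $Y$ or the paper's Riesz-representation argument finishes the proof.
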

\begin{proof}
See \cref{app:Salzo} for details.
\end{proof}

\section{The grand finale: FISTA converges weakly!}

\label{s:theproof}

\begin{theorem}[Bo\c{t}-Fadili-Nguyen and Jang-Ryu, 2025] 
{\rm \cite{BFN2025,JangRyu2025}}
\label{t:main}
Let $(x_k)_\knn$ and $(y_k)_\knn$ be the FISTA sequences 
(see \cref{30.3}). 
Then there exists $s\in S$ such that 
\begin{equation}
x_k\weakly s, \quad
x_k-y_k\to 0, \quad\text{and}\quad
y_k\weakly s. 
\end{equation}
\end{theorem}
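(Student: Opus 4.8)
The plan is to obtain weak convergence of $(x_k)_\knn$ from the abstract tools assembled above, and then transfer it to $(y_k)_\knn$. The three inputs are the convergence of $(\xi_k(s))_\knn$ for each $s\in S$ (\cref{lem:key}), the divergence of $\sum_k 1/(t_k-1)$ (\cref{l:Ryu}), and the BCCH Lemma (\cref{l:Radu}). What makes them fit together is the elementary identity obtained by rearranging \cref{e:xyz},
\[
z_{k+1} \;=\; x_{k+1} + (t_k-1)(x_{k+1}-x_k),
\]
which says that, for every fixed $w\in X$, the real sequences $h_k := \scal{x_k}{w}$ and $g_k := \scal{z_{k+1}}{w}$ satisfy \emph{precisely} the BCCH recursion $g_k = h_{k+1} + (t_k-1)(h_{k+1}-h_k)$ with weights $\varphi_k := t_k-1$.

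First I would show that $(x_k)_\knn$ is a Salzo sequence. Let $w_1,w_2$ be weak cluster points of $(x_k)_\knn$; by \cref{thm:bddfista} both belong to $S$, so $F(w_1)=F(w_2)=\mu$ and the quantity $\delta_k = F(x_k)-\mu$ does not depend on which of $w_1,w_2$ we subtract. Hence
\[
\xi_k(w_1)-\xi_k(w_2)
= \tfrac{\lip}{2}\big(\norm{z_k-w_1}^2-\norm{z_k-w_2}^2\big)
= \tfrac{\lip}{2}\big(\norm{w_1}^2-\norm{w_2}^2\big)-\lip\scal{z_k}{w_1-w_2},
\]
and since $(\xi_k(w_1))_\knn$ and $(\xi_k(w_2))_\knn$ both converge (\cref{lem:key}), so does $(\scal{z_k}{w_1-w_2})_\knn$, say to $\ell\in\RR$; shifting the index, also $\scal{z_{k+1}}{w_1-w_2}\to\ell$. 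I would then invoke \cref{l:Radu} with $h_k := \scal{x_k}{w_1-w_2}$, $g_k := \scal{z_{k+1}}{w_1-w_2}$, and $\varphi_k := t_k-1$ (after a harmless shift past the index $k=0$, where $t_0-1=0$; for $k\ge 1$ one has $t_k-1>0$, and $\sum_k 1/(t_k-1)=+\infty$ by \cref{l:Ryu}), concluding that $\scal{x_k}{w_1-w_2}\to\ell$. This is exactly condition \cref{260113a}, so $(x_k)_\knn$ is a Salzo sequence; being bounded by \cref{thm:bddfista}, it is weakly convergent by \cref{p:Salzo}, say $x_k\weakly s$. As a weak cluster point of $(x_k)_\knn$, the point $s$ lies in $S$ by \cref{thm:bddfista}.

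To finish, I would transfer this to $(y_k)_\knn$. Rearranging \cref{e:xyz} the other way gives $x_{k+1}=(1-\tfrac1{t_k})x_k+\tfrac1{t_k}z_{k+1}$, hence $\norm{x_{k+1}-x_k}=\tfrac1{t_k}\norm{z_{k+1}-x_k}\to 0$, since $(x_k)_\knn$ and $(z_k)_\knn$ are bounded (\cref{thm:bddfista} and \cref{e:zkbd}) while $t_k\to\infty$. Then \cref{30.3b} gives $y_{k+1}-x_{k+1}=\tfrac{t_k-1}{t_{k+1}}(x_{k+1}-x_k)$, where the coefficient is bounded (for $k\ge 2$, $0\le\tfrac{t_k-1}{t_{k+1}}\le\tfrac{k}{(k+3)/2}<2$ by \cref{l:Ryu} and \cref{eq:FS:i}), so $y_k-x_k\to 0$. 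Consequently $y_k=x_k+(y_k-x_k)\weakly s$, because $x_k\weakly s$ and $y_k-x_k\to 0$ strongly; together with $x_k\weakly s$ and $x_k-y_k\to 0$ this is the claim.

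I expect the crux to be the opening of the second paragraph: spotting that $z_{k+1}$ is an over-relaxation of $x_{k+1}$ along $x_{k+1}-x_k$ with the \emph{exact} coefficient $t_k-1$, which is what turns the essentially cost-free fact that $(\scal{z_k}{\cdot})_\knn$ converges on differences of cluster points (a by-product of the monotonicity of $(\xi_k)_\knn$) into convergence of $(\scal{x_k}{\cdot})_\knn$ through \cref{l:Radu}. Everything else is bookkeeping: the one-step index shift so that the BCCH weights are positive, the two routine boundedness estimates for $x_{k+1}-x_k$ and $y_{k+1}-x_{k+1}$, and the divergence $\sum 1/(t_k-1)=+\infty$, which is \cref{l:Ryu}.
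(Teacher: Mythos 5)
Your proposal is correct and follows essentially the same route as the paper: the identity $z_{k+1}=x_{k+1}+(t_k-1)(x_{k+1}-x_k)$, the convergence of $(\scal{z_k}{w_1-w_2})_\knn$ extracted from $\xi_k(w_1)-\xi_k(w_2)$, the BCCH Lemma with $\varphi_k=t_k-1$, and the Salzo-sequence conclusion. The only cosmetic difference is in showing $y_k-x_k\to 0$, where the paper uses $y_k-x_k=\tfrac{1}{t_k}(z_k-x_k)$ directly rather than passing through $x_{k+1}-x_k\to 0$ and the update formula; both are valid.
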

\begin{proof}
We break up the proof into several steps.
We will first show that $(x_k)_\knn$ converges weakly
to some point in $S$.
To this end, we assume that 
\begin{equation}
\label{w1w2}
\text{
$w_1,w_2$ are two (possibly different) 
weak cluster points of $(x_k)_\knn$.
}
\end{equation}

\textbf{Claim~1:} 
$(\scal{z_k}{w_1-w_2})_\knn$ converges. \\
By \cref{thm:bddfista}\cref{thm:bddfista:ii}, 
$\{w_1,w_2\}\subseteq S$. 
Hence, by \cref{lem:key}, 
$(\xi_k(w_1))_\knn$ and 
$(\xi_k(w_2))_\knn$ are convergent sequences. 
Recalling the definition of $\xi_k$ (see \cref{eq:def:xi}), 
we deduce that 
the difference sequence 
$(\xi_k(w_1)-\xi_k(w_2))_\knn
=(\tfrac{\lip}{2}(\|z_k-w_1\|^2 - \|z_k-w_2\|^2))_\knn$
is convergent as well. 
Expanding and simplifying the latter sequence gives convergence of 
$(\tfrac{\lip}{2}(-2\scal{z_k}{w_1-w_2}+\|w_1\|^2-\|w_2\|^2))_\knn$. This completes the proof of 
\textbf{Claim~1}. 

Now set 
\begin{equation}
h_k := \scal{x_k}{w_1-w_2}
\quad\text{and}\quad
g_k := h_{k+1}+(t_k-1)(h_{k+1} - h_k).
\end{equation}

\textbf{Claim~2:} 
$(\forall\knn)$ 
$g_k = \scal{z_{k+1}}{w_1-w_2}$.\\
Indeed, 
\begin{align*}
g_k 
&= h_{k+1} + (t_k-1)(h_{k+1}-h_k) \tag{by definition of $g_k$}\\
&= 
(1-t_k)\scal{x_k}{w_1-w_2}+t_k\scal{x_{k+1}}{w_1-w_2}
\tag{by definition of $h_k$}\\
&= 
\scal{(1-t_k)x_k+t_kx_{k+1}}{w_1-w_2}\\
&= 
\scal{z_{k+1}}{w_1-w_2}, \tag{by \cref{e:xyz}}
\end{align*}
which verifies \textbf{Claim~2}. 

\textbf{Claim~3:} 
$(\scal{x_k}{w_1-w_2})_\knn$ converges. \\
By \textbf{Claim~2}, $(g_k)_\knn$ is convergent. 
Set $(\varphi_k)_{k\geq 2} := (t_k-1)_{k\geq 2}$. 
\cref{l:Ryu} implies that 
$(\varphi_k)_{k\geq 2}$ lies in $\RPP$ and 
$\sum_{k\geq 2} \tfrac{1}{\varphi_k}=+\infty$. 
Altogether, the BCCH \cref{l:Radu} yields 
the convergence of $(h_k)_\knn$, which is precisely
what \textbf{Claim~3} states. 

\textbf{Claim~4}: $(x_k)_\knn$ is a bounded Salzo sequence.\\
The boundedness follows from
\cref{thm:bddfista}\cref{thm:bddfista:i}.
And \cref{w1w2} and \textbf{Claim~3} imply that 
$(x_k)_\knn$ is a Salzo sequence. 

\textbf{Claim~5}:
$(x_k)_\knn$ converges weakly to some point $s\in S$.\\
Weak convergence follows from \textbf{Claim~4} and \cref{p:Salzo}. 
The weak limit $s$ must lie in $S$
by \cref{thm:bddfista}\cref{thm:bddfista:ii}.

\textbf{Claim~6}:
$y_k-x_k\to 0$.\\
Recall that $(x_k)_\knn$ and $(z_k)_\knn$ 
are bounded (see \textbf{Claim~4} and \cref{e:zkbd}) 
while $t_k \to +\infty$ (by \cref{eq:FS:i}). 
On the other hand, $y_k-x_k = \tfrac{1}{t_k}(z_k-x_k)$ by 
\cref{auxseq}. 
Altogether, \textbf{Claim~6} follows.

\textbf{Claim~7}:
$(y_k)_\knn$ also converges weakly to $s$.\\
Combine \textbf{Claim~5} with \textbf{Claim 6}. 

This completes the proof of the theorem. 
\end{proof}

We conclude the paper with an illustration of 
\cref{t:main}. 

\begin{example}[a feasibility problem]
Suppose that $X=\RR^2$, 
$f = \thalb\distsq{\cdot}{U}$, 
and 
$g = \iota_V$, 
where $U:=\RP^2$ and 
$V := \menge{(v_1,v_2)\in\RR^2}{v_1+v_2=1}$. 
Then $f$ is $\lip$-smooth, with $\beta=1$, 
and the line segment 
$S = U \cap V = [(0,1),(1,0)]$ is not a singleton. 
Thanks to \cref{t:main}, every FISTA sequence 
$(x_k)_\knn$ is now known to converge 
to some point in $S$. 
See \cref{fig:myplot} for a visualization of the first few iterates of FISTA with starting point 
$x_0 = (5,0)$ and parameter sequence \cref{BTpar}.
Using {\rm \texttt Julia}, we find numerically that 
$\lim_{k\to\infty} x_k \approx (0.4829,0.5171)$ 
which lies in the interior of $U$. 
Interestingly, our numerical result coincides with the answers provided\footnote{
Grok~4 and ChatGPT-5.2 were prompted on 
January 19, 2026 with: \texttt{
Run FISTA in the Euclidean plane, with the usual 
Back-Teboulle parameters. Starting point is 
\texttt{\$(5,0)\$}. The differentiable function (Lipschitz-1) is \texttt{\$1/2\$} times the distance squared, where we measure the distance to the nonnegative orthant. The gradient step is thus a simple projection onto the orthant. The other function is the indicator function of the line \texttt{\$x+y=1\$}. This also has a closed form. Compute the first 5 iterates of FISTA. Can you come up with an analytical formula? If not, can you approximate the limit of the FISTA iteration numerically?
} Neither we nor the two LLMs were able to find the limit analytically. 
\texttt{Gemini~3} and \texttt{Claude~Sonnet~4.5} suggested that the limit is $(1,0)$ and $(\thalb,\thalb)$, respectively. 
} by 
{\rm\texttt Grok} and by {\rm\texttt ChatGPT}. 

\textup{\texttt{\noindent 
}}

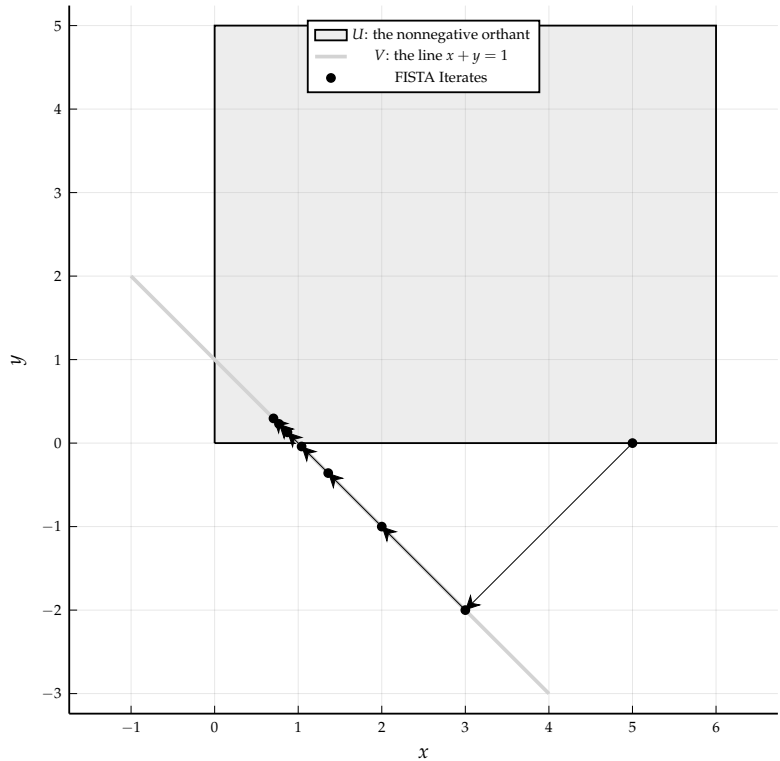
\begin{figure}[H] 
    \centering
    \scalebox{0.7}{

\begin{tikzpicture}[/tikz/background rectangle/.style={fill={rgb,1:red,1.0;green,1.0;blue,1.0}, fill opacity={1.0}, draw opacity={1.0}}, show background rectangle]
\begin{axis}[point meta max={nan}, point meta min={nan}, legend cell align={left}, legend columns={1}, title={}, title style={at={{(0.5,1)}}, anchor={south}, font={{\fontsize{14 pt}{18.2 pt}\selectfont}}, color={rgb,1:red,0.0;green,0.0;blue,0.0}, draw opacity={1.0}, rotate={0.0}, align={center}}, legend style={color={rgb,1:red,0.0;green,0.0;blue,0.0}, draw opacity={1.0}, line width={1}, solid, fill={rgb,1:red,1.0;green,1.0;blue,1.0}, fill opacity={1.0}, text opacity={1.0}, font={{\fontsize{8 pt}{10.4 pt}\selectfont}}, text={rgb,1:red,0.0;green,0.0;blue,0.0}, cells={anchor={center}}, at={(0.5, 0.98)}, anchor={north}}, axis background/.style={fill={rgb,1:red,1.0;green,1.0;blue,1.0}, opacity={1.0}}, anchor={north west}, xshift={1.0mm}, yshift={-1.0mm}, width={150.4mm}, height={150.4mm}, scaled x ticks={false}, xlabel={$x$}, x tick style={color={rgb,1:red,0.0;green,0.0;blue,0.0}, opacity={1.0}}, x tick label style={color={rgb,1:red,0.0;green,0.0;blue,0.0}, opacity={1.0}, rotate={0}}, xlabel style={at={(ticklabel cs:0.5)}, anchor=near ticklabel, at={{(ticklabel cs:0.5)}}, anchor={near ticklabel}, font={{\fontsize{11 pt}{14.3 pt}\selectfont}}, color={rgb,1:red,0.0;green,0.0;blue,0.0}, draw opacity={1.0}, rotate={0.0}}, xmajorgrids={true}, xmin={-1.7400000000000002}, xmax={6.74}, xticklabels={{$-1$,$0$,$1$,$2$,$3$,$4$,$5$,$6$}}, xtick={{-1.0,0.0,1.0,2.0,3.0,4.0,5.0,6.0}}, xtick align={inside}, xticklabel style={font={{\fontsize{8 pt}{10.4 pt}\selectfont}}, color={rgb,1:red,0.0;green,0.0;blue,0.0}, draw opacity={1.0}, rotate={0.0}}, x grid style={color={rgb,1:red,0.0;green,0.0;blue,0.0}, draw opacity={0.1}, line width={0.5}, solid}, axis x line*={left}, x axis line style={color={rgb,1:red,0.0;green,0.0;blue,0.0}, draw opacity={1.0}, line width={1}, solid}, scaled y ticks={false}, ylabel={$y$}, y tick style={color={rgb,1:red,0.0;green,0.0;blue,0.0}, opacity={1.0}}, y tick label style={color={rgb,1:red,0.0;green,0.0;blue,0.0}, opacity={1.0}, rotate={0}}, ylabel style={at={(ticklabel cs:0.5)}, anchor=near ticklabel, at={{(ticklabel cs:0.5)}}, anchor={near ticklabel}, font={{\fontsize{11 pt}{14.3 pt}\selectfont}}, color={rgb,1:red,0.0;green,0.0;blue,0.0}, draw opacity={1.0}, rotate={0.0}}, ymajorgrids={true}, ymin={-3.24}, ymax={5.24}, yticklabels={{$-3$,$-2$,$-1$,$0$,$1$,$2$,$3$,$4$,$5$}}, ytick={{-3.0,-2.0,-1.0,0.0,1.0,2.0,3.0,4.0,5.0}}, ytick align={inside}, yticklabel style={font={{\fontsize{8 pt}{10.4 pt}\selectfont}}, color={rgb,1:red,0.0;green,0.0;blue,0.0}, draw opacity={1.0}, rotate={0.0}}, y grid style={color={rgb,1:red,0.0;green,0.0;blue,0.0}, draw opacity={0.1}, line width={0.5}, solid}, axis y line*={left}, y axis line style={color={rgb,1:red,0.0;green,0.0;blue,0.0}, draw opacity={1.0}, line width={1}, solid}, colorbar={false}]
    \addplot[color={rgb,1:red,0.0;green,0.0;blue,0.0}, name path={117}, area legend, fill={rgb,1:red,0.6627;green,0.6627;blue,0.6627}, fill opacity={0.2}, draw opacity={1.0}, line width={1}, solid]
        table[row sep={\\}]
        {
            \\
            0.0  0.0  \\
            6.0  0.0  \\
            6.0  5.0  \\
            0.0  5.0  \\
            0.0  0.0  \\
        }
        ;
    \addlegendentry {$U$: the nonnegative orthant}
    \addplot[color={rgb,1:red,0.8275;green,0.8275;blue,0.8275}, name path={118}, draw opacity={1.0}, line width={2}, solid]
        table[row sep={\\}]
        {
            \\
            -1.0  2.0  \\
            4.0  -3.0  \\
        }
        ;
    \addlegendentry {$V$: the line $x + y = 1$}
    \addplot[color={rgb,1:red,0.0;green,0.0;blue,0.0}, name path={119}, only marks, draw opacity={1.0}, line width={0}, solid, mark={*}, mark size={2.25 pt}, mark repeat={1}, mark options={color={rgb,1:red,0.0;green,0.0;blue,0.0}, draw opacity={1.0}, fill={rgb,1:red,0.0;green,0.0;blue,0.0}, fill opacity={1.0}, line width={0.75}, rotate={0}, solid}]
        table[row sep={\\}]
        {
            \\
            5.0  0.0  \\
            3.0  -2.0  \\
            2.0  -1.0  \\
            1.3591232374373394  -0.35912323743733954  \\
            1.0404776519977057  -0.0404776519977057  \\
            0.8712565148187956  0.12874348518120446  \\
            0.7699305202142437  0.23006947978575626  \\
            0.7041777187727976  0.2958222812272025  \\
        }
        ;
    \addlegendentry {FISTA Iterates}
    \addplot[color={rgb,1:red,0.0;green,0.0;blue,0.0}, name path={120}, quiver={u={\thisrow{u}}, v={\thisrow{v}}, every arrow/.append style={-{Stealth[length = 8.0pt, width = 8.0pt]}}}, forget plot]
        table[row sep={\\}]
        {
            x  y  u  v  \\
            5.0  0.0  -2.0  -2.0  \\
            3.0  -2.0  -1.0  1.0  \\
            2.0  -1.0  -0.6408767625626606  0.6408767625626605  \\
            1.3591232374373394  -0.35912323743733954  -0.3186455854396337  0.31864558543963384  \\
            1.0404776519977057  -0.0404776519977057  -0.16922113717891007  0.16922113717891016  \\
            0.8712565148187956  0.12874348518120446  -0.10132599460455194  0.1013259946045518  \\
            0.7699305202142437  0.23006947978575626  -0.06575280144144613  0.06575280144144624  \\
        }
        ;
\end{axis}
\end{tikzpicture}}
    \caption{The first few iterates of FISTA
    with parameter sequence \cref{BTpar} and
    starting point $x_0=(5,0)$.
    Note that 
    $\lim_{k\to\infty} x_k \approx (0.4829,0.5171)$; in contrast, PGM converges to $(1,0)$.} 
    \label{fig:myplot}
\end{figure}
\end{example}

\section*{Acknowledgements}
{\color{black}
The authors thank an anonymous reviewer for insightful and helpful comments. 
}
The research of HHB and WMM was partially supported through 
Discovery Grants by the Natural Sciences and Engineering Research Council of Canada. 
The research  WMM was also partially supported through 
the Ontario Early Researcher Award.

\appendix

\renewcommand\thesection{\Alph{section}}

\numberwithin{equation}{section}
\numberwithin{proposition}{section}

\crefname{appendix}{appendix}{appendices}
\Crefname{appendix}{Appendix}{Appendices}
\crefalias{section}{appendix} 

\section{Proof of \cref{lem:key}}
\label{app:zkbd}

\begin{proof} 
We write $\xi_k$ instead of $\xi_k(s)$ for brevity. 
The left inequality 
in \cref{e:ineq1} follows 
because $\mu=\min F(X)$.
Let $k \ge 1$ and observe that 
\begin{equation}
 \|z_k - s\|^2 =
 \|t_k y_k +(1-t_k )x_k -s \|^2 .
\label{eq:z-norm-eq}
\end{equation}
It follows from \cref{eq:FS:ii},
\cref{eq:delta-def},
the convexity of $F$,
\cref{lem:29.2},
\cref{30.3a},
\cref{auxseq}, \cref{e:xyz} and \cref{eq:z-norm-eq}
that 
\begin{align*}
t_{k-1}^2 \delta_k - t_k^2 \delta_{k+1}
&\ge (t_k^2 - t_k)\delta_k - t_k^2\delta_{k+1} 
\\
&= t_k^2\!\left(\big(1-\tfrac{1}{t_k}\big)\!\big(F(x_k)-F(s)\big) - \big(F(x_{k+1})-F(s)\big)\right) 
\\
&= t_k^2\!\left(\big(1-\tfrac{1}{t_k}\big)F(x_k) + \tfrac{1}{t_k}F(s) - F(x_{k+1})\right) \\
&\ge t_k^2\!\left(F\!\left(\tfrac{1}{t_k}s + \big(1-\tfrac{1}{t_k}\big)x_k\right) - F(x_{k+1})\right)
\\
&\ge \frac{t_k^2 \lip}{2}\!\left(\Big\|\tfrac{1}{t_k}s + \big(1-\tfrac{1}{t_k}\big)x_k - x_{k+1}\Big\|^2
- \Big\|\tfrac{1}{t_k}s + \big(1-\tfrac{1}{t_k}\big)x_k - y_k\Big\|^2 \right) \\
&= \frac{\lip}{2}\!\big(\|s + (t_k-1)x_k - t_k x_{k+1}\|^2 - \|s + (t_k-1)x_k - t_k y_k\|^2\big) \\
&= \frac{\lip}{2}\!\big(\|z_{k+1} - s\|^2 - \|z_k - s\|^2\big)
.
\end{align*}
Rearranging yields
\begin{equation}
\xi_{k+1}=t_k^2 \delta_{k+1} + \frac{\lip}{2}\|z_{k+1} - s\|^2
\le
t_{k-1}^2 \delta_k + \frac{\lip}{2}\|z_k - s\|^2 =\xi_{k}.
\label{eq:master-ineq}
\end{equation}
This proves that
 \begin{equation}
\xi_{k+1}\le \xi_{k}\le \cdots \le \xi_1.
 \end{equation}
 We now turn to the right inequality in \cref{e:ineq1}. Observe that
 $\xi_1=t_0^2\big(F(x_1) - \mu\big) +\tfrac{\lip}{2} \|z_1 - s\|^2$.
Since $t_0=1$, $x_0=y_0$, and hence $y_1=x_1$ (by \cref{30.3b}), we have $z_1=x_1$ and thus
\begin{equation}
\xi_1=\big(F(x_1) - \mu\big) +\tfrac{\lip}{2} \|x_1 - s\|^2.
\end{equation}
Using \cref{lem:29.2} 
applied with $(x,y)$
replaced by $(s,x_0)$
and recalling $Tx_0 = Ty_0 = x_1$, we estimate
\begin{equation}
\mu - F(x_1) = F(s) - F(x_1)
\ge \tfrac{\lip}{2}\|x_1-s \|^2 - \tfrac{\lip}{2}\|x_0-s \|^2;
\end{equation}
equivalently,
\begin{equation}
F(x_1) - \mu
\le \frac{\lip}{2} \big(\|x_0-s \|^2 - \|x_1-s \|^2 \big).
\label{eq:309}
\end{equation}
Altogether,
we learn that 
$\xi_1\le \tfrac{\lip}{2}\norm{x_0-s}^2$
and \cref{e:ineq1} is verified.

It now follows from \cref{e:ineq1}
that $(\xi_k)_{k\in\NN}$
is a bounded decreasing sequence of real numbers, hence 
convergent.

Finally, because 
$(\xi_k)_{k\in\NN}$
is the sum of two nonnegative sequences, 
namely $(t_{k-1}^2 \delta_k )_{k\in\NN}$
 and $ (\frac{\lip}{2}\|z_k - s\|^2)_{k\in\NN}$, 
the boundedness of $(z_k)_\knn$ follows. 
 \end{proof}

\section{Proof of \cref{l:Radu} when $\ell=\pm\infty$}

\label{app:ellpm}


\begin{proof}
Assume first that $\ell=+\infty$; i.e., 
$g_k\to+\infty$. 
Fix an arbitrary ``hurdle'' $M>0$, and 
get $K\in\NN$ such that 
$(\forall k\geq K)$ 
$g_k\geq M$. 
Using \cref{260112e}, we estimate, 
for all $n\geq K$, 
\begin{subequations}
\begin{align}
\label{260112m}
h_n  - h_0\prod_{j=0}^{n-1}\lambda_j 
&= 
\sum_{k=0}^{n-1} w_{n,k}g_k
= 
\sum_{k=0}^{K-1} w_{n,k}g_k
+
\sum_{k=K}^{n-1} w_{n,k}g_k
\geq 
\sum_{k=0}^{K-1} w_{n,k}g_k
+
M\sum_{k=K}^{n-1} w_{n,k}\\
&= 
\sum_{k=0}^{K-1} w_{n,k}(g_k-M)
+ M\sum_{k=0}^{n-1} w_{n,k}. 
\label{260112l}
\end{align}
\end{subequations}
First, the product in \cref{260112m} converges 
to $0$ by \cref{260112c}. 
Now consider the two sums in \cref{260112l}:
The left sum converges to $0$ as $n\to\infty$ 
by \cref{260112h} while 
the right sum converges to $M$ by 
\cref{260112f}. 
Altogether, we deduce that 
\begin{equation}
\varliminf_{n\to\infty} h_n \geq M. 
\end{equation}
Now letting $M\to +\infty$ yields
$\varliminf_{n\to\infty}h_n= +\infty$ and the 
conclusion follows in this case. 

The proof for $\ell=-\infty$ is similar.
(Alternatively, 
apply the just-proven $\ell=+\infty$ result to the sequence 
$(-h_k)_\knn$.) 
 \end{proof}

\section{Proof of \cref{ex:sinh}}

\label{app:sinh}

\begin{proof}
For $k\geq 1$, we have 
\begin{align}
g_k 
&= 
h_{k+1}+\varphi_k(h_{k+1}-h_k)
= \frac{\varphi_k}{1+\varphi_k}h_k 
+ \varphi_k \Big(
\frac{\varphi_k}{1+\varphi_k}h_k -h_k\Big)\\
&= 
\frac{\varphi_k}{1+\varphi_k}
h_k\big(1+\varphi_k - (1+\varphi_k) \big) = 0 = \ell. 
\end{align}
Now (recall $h_1 = 1$)
\begin{equation}
h_k = h_1\prod_{j=1}^{k-1}\frac{j^2}{1+j^2}
\to \prod_{j=1}^\infty\frac{j^2}{1+j^2}. 
\end{equation}
On the other hand, we have the classical Euler formula 
(see, e.g., \cite[Section~2.2]{Melnikov})
\begin{equation}
\frac{\sinh(\pi x)}{\pi x}
= \prod_{n=1}^\infty \Big(1+\frac{x^2}{n^2} \Big)
\end{equation}
which yields (upon setting $x=1$) 
\begin{equation}
\frac{\sinh(\pi)}{\pi}
= \prod_{n=1}^\infty \Big(1+\frac{1}{n^2} \Big) 
= \prod_{n=1}^\infty \frac{n^2+1}{n^2}. 
\end{equation}
Altogether, 
\begin{equation}
h_k \to \prod_{j=1}^\infty\frac{j^2}{1+j^2} 
= \Big(\prod_{j=1}^\infty\frac{1+j^2}{j^2}\Big)^{-1}
= \frac{\pi}{\sinh(\pi)}
\approx 0.272 > 0 = \ell. 
\end{equation}
\end{proof}
                                        
\section{Proof of \cref{p:bonusSalzo}}

\label{app:Salzo}

\begin{proof}
The linearity of the inner product in each variable shows that 
$(\scal{x_k}{y})_\knn$ converges for 
all $y\in\spn C$. 
Next, the continuity of the inner product shows that 
$(\scal{x_k}{y})_\knn$ converges for 
all $y\in Y= \cspan C$. 
Thus 
\begin{equation}
\label{260113d}
(\scal{x_k}{y})_\knn 
\;\;\text{converges for all $y\in Y$, 
a closed linear subspace of $X$.}
\end{equation}
Thinking of $y\in Y$ as $y=P_Yx$, 
\cref{260113d} is equivalent to 
\begin{equation}
(\scal{P_Yx_k}{x})_\knn 
\;\;\text{converges for all $x\in X$.} 
\end{equation}
By the Principle of Uniform Boundedness (Banach-Steinhaus), 
\begin{equation}
(y_k)_\knn := (P_Yx_k)_\knn 
\;\;\text{is bounded.}
\end{equation}
Note that $\ell(x) := \lim_{n\to\infty} \scal{x}{y_n}$ exists for all $x\in X$. 
The function $\ell$ is clearly linear. 
Moreover, $\ell(x) \leq \|x\|\sup_\knn \|y_k\|$ and so 
$\|\ell\|\leq \sup_\knn\|y_k\|<+\infty$.
So $\ell$ is also continuous. 
By the Riesz Representation Theorem,
there exists 
$\overline{y}\in X$ such that 
$(\forall x\in X)$
$\lim_{k\to\infty} \scal{x}{P_Yx_k} = \ell(x) = \scal{x}{\overline{y}}$, i.e., $(P_Yx_k)_\knn$ converges weakly  to $\overline{y}$. 
\end{proof}

\end{document}